\newtheorem{theorem}{Theorem}
\newtheorem{lemma}[theorem]{Lemma}
\newtheorem{observation}{Observation}
\newcommand{\E}{{\rm E}}
\DeclarePairedDelimiter{\floor}{\lfloor}{\rfloor}
\newcommand{\ex}{\text{ex}}
\def\exP{\textrm{ex}_{\mathcal{P}}}
\title{Rainbow planar Tur{\'a}n numbers of cycles}
\author{
Xiaonan Liu \thanks{Department of Mathematics, Vanderbilt University, Nashville, TN, 37240
({\tt xiaonan.liu@vanderbilt.edu}).}}
\begin{document}

\maketitle
\begin{abstract}
The rainbow Tur{\'a}n number of a fixed graph $H$, denoted by ${\ex}^*(n,H)$, is the maximum number of edges in an $n$-vertex graph such that it admits a proper edge coloring with no rainbow $H$. We study this problem in planar setting. The rainbow planar Tur{\'a}n number of a  graph $H$, denoted by ${\exP}^*(n,H)$, is the maximum number of edges in an $n$-vertex planar graph such that it has a proper edge coloring with no rainbow $H$. We consider the rainbow planar Tur{\'a}n number of cycles. Since $C_3$ is complete, $\exP^*(n, C_3)$ is exactly its planar Tur{\'a}n number, which is $2n-4$ for $n\ge 3$. We show that $\exP^*(n, C_4)=3n-6$ for $n=k^2-3k+2$ where $k\ge 5$, and $\exP^*(n,C_k)=3n-6$ for all $k\ge 5$ and $n\ge 3$.
\end{abstract}

\section{Introduction}
All graphs in this paper are finite and simple. Given a graph $H$, the classical \emph{Tur\'an number} of $H$, denoted by $\ex(n,H)$, is the maximum number of edges in an $n$-vertex graph that does not contain $H$ as subgraph. The study on the Tur\'an number of graphs is a central topic in extremal graph theory. Motivated by extremal problems in additive number theory and graph coloring, Keevash, Mubayi, Sudakov and Verstra\"ete~\cite{KMSV2007} initiated the study on the \emph{rainbow Tur\'an problems}.
Given a graph $G$, a \emph{proper edge-coloring} of $G$ is an edge coloring of $G$ such that no two adjacent edges of $G$ receive the same color, and a \emph{$k$-edge-coloring} of $G$ for some integer $k$ is a an edge coloring of $G$ using $k$ colors.  An edge-colored graph $H$ is called \emph{rainbow} if no two edges of $H$ receive the same color. For a fixed graph $H$, the \emph{rainbow Tur\'an number} of $H$, denoted by $\ex^*(n,H)$, is defined as the maximum number of edges in an $n$-vertex graph that has a proper edge-coloring with no rainbow copy of $H$. It is obvious that $\ex^*(n,H)\geq \ex(n,H)$. In~\cite{KMSV2007}, Keevash, Mubayi, Sudakov and Verstra\"ete determined $\ex^*(n,H)$ asymptotically for non-bipartite graphs $H$. In particular, they showed that for any fixed $H$ and sufficiently large $n$, 
$$\ex(n,H)\leq \ex^*(n,H)\leq \ex(n,H)+o(n^2).$$
The \emph{chromatic number} of a graph $G$, denoted by $\chi(G)$, is the smallest number of colors that are needed to color all the vertices in $G$ such that no two adjacent vertices of $G$ receive the same color. A graph $H$ is called \emph{color-critical} if it contains an edge $e$ such that $\chi(H\backslash e) = \chi(H)-1$. For example, complete graphs and odd cycles are all color-critical.  Keevash, Mubayi, Sudakov and Verstra\"ete in~\cite{KMSV2007} proved  that
$\ex(n,H)= \ex^*(n,H)$ if $H$ is color-critical. For even cycles, they~\cite{KMSV2007} showed that for all $k\geq 2$, $\ex^*(n,C_{2k})\geq cn^{1+1/k}$ for some $c>0$ and proved that $\ex^*(n,C_{2k})= O(n^{1+1/k})$ for $k\in\{2,3\}$. They conjectured that the same asymptotic upper bound on $\ex^*(n,C_{2k})$ holds for all $k\geq 2$, which was recently confirmed by Janzer~\cite{Janzer2023}.
The rainbow Tur\'an number of paths were recently studied in ~\cite{JPS2017, JR2019, EGM2019, HP2021, Halfpap2023}. Further results on (generalized) rainbow Tur\'an problems can be found in~\cite{Janzer2023, GMMP2022} and reference therein. 

In a more sparse setting, recently, there has been extensive research on Tur\'an problems in planar graphs. For a fixed graph $H$, the \emph{planar Tur\'an number} of $H$, denoted by $\exP(n,H)$, is defined as the maximum number of edges in an $n$-vertex planar graph without containing $H$ as a subgraph. It is clear from Euler's formula that $\exP(n,C_3)=2n-4$ for $n\ge 3$. Dowden~\cite{Dowden2016} proved that $\exP(n,C_4)\leq \frac{15(n-2)}{7}$ for all $n\geq 4$ and $\exP(n,C_5)\leq \frac{12n-33}{5}$ for all $n\geq 11$. Ghosh, Gy\H{o}ri, Martin, Paulos and Xiao~\cite{GGMPRX2022} showed that $\exP(n,C_6)\leq \frac{5n-14}{2}$ for all $n\geq 18$. All bounds above are tight for infinitely many $n$. In the same paper, Ghosh et al.~\cite{GGMPRX2022} conjectured that $\exP(n, C_{k})\leq \frac{3(k-1)}{k}n-\frac{6(k+1)}{k}$ for all $k\geq 7$ and sufficiently large $n$, which was disproved by Cranston, Lidick\'y, Liu and Shantanam~\cite{CLLS2022} for all $k\geq 11$ (see also~\cite{Lan-Song2022}). However, the conjecture of Gosh et al.~\cite{GGMPRX2022} for $7\leq k\leq 10$ may still hold and was recently verified to be true for $k=7$ very recently by Shi, Walsh and Yu~\cite{SWY2025} and independently by Gy\H{o}ri, Li and Zhou~\cite{GLZ2023}. Confirming a conjecture of Cranston et al. \cite{CLLS2022}, Shi, Walsh and Yu~\cite{SWY2025_dense} proved an upper bound of $3n-6-Dn/\ell^{\log_23}$ for $\exP(n, C_{\ell})$ for all $\ell, n\ge 4$, where $D$ is some constant. For recent results on the planar Tur\'an number of graphs other than cycles, see the recent survey by Lan, Shi and Song~\cite{LSS2021} and references therein.

Very recently, motivated by the recent active developments on the rainbow Tur\'an number and planar Tur\'an number, Gy\H{o}ri, Martin, Paulos, Tompkins and Varga~\cite{GMPTV2025} initiated the study on the \emph{rainbow planar Tur\'an problems}. Given a fixed graph $H$, the \emph{rainbow planar Tur\'an number} of $H$, denoted by $\exP^*(n,H)$, is defined as the maximum number of edges in an $n$-vertex planar graph that has a proper edge-coloring with no rainbow copy of $H$. A \emph{planar triangulation} is an edge-maximal planar graph such that every face of its plane embedding is bounded by a triangle. By Euler's formula, an $n$-vertex planar triangulation ($n\ge 3$) has exactly $3n-6$ edges.
Note that for each $n\geq 3$, there exist planar triangulations on $n$ vertices that can be properly edge-colored with at most $6$ colors. For example, Figure~\ref{fig:6colors} is a properly $6$-edge-colored planar triangulation on even number vertices. Moreover, deleting the lower left vertex or the upper right vertex results in a properly $6$-edge-colored planar triangulation on odd number vertices. It follows that if a graph $H$ has more than six edges then $\exP^*(n,H)=3n-6$ for $n\ge 3$. Gy\H{o}ri et al.~\cite{GMPTV2025} made a systematic study on the rainbow planar Tur\'an number pf paths. In particular, they observed that $\exP^*(n,P_3) = \floor{n/2}$ and showed that $\exP^*(n,P_4) = \exP^*(n,P_5) = \floor{3n/2}$ for all $n\geq 4$.  We know that $\exP^*(n,P_k)=3n-6$ for all $k\ge 8$ and $n\ge 3$. The cases for $P_6, P_7$   remain open. Gy\H{o}ri et al.~\cite{GMPTV2025} conjectured that
$2n-O(1)\leq \exP^*(n,P_6)\leq 2n$ and $5n/2-O(1)\leq \exP^*(n,P_7)\leq 5n/2$. He and Liu in~\cite{HL2024} considered the rainbow planar Tur{\'a}n number for some double stars, $S_{1,k}$ for all $k$ except $k=5$ and $S_{2,2}$, where $S_{s,k}$ denotes the graph obtained by taking an edge with $s$ vertices joining one of its end vertices and $k$ vertices joining the other end vertex. As $S_{1,5}$ has seven edges, we know that $\exP(n,S_{1,5})=3n-6$ for $n\geq 3$.

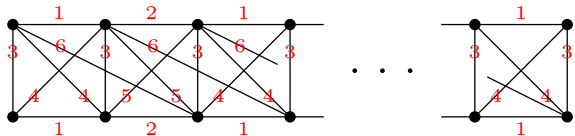
\begin{figure}[htb]
    \centering
    \resizebox{8cm}{!}
    {\begin{tikzpicture}[scale=1, wvertex/.style={circle, draw=red, fill=red, scale=0.3}, bvertex/.style={circle, draw=black, fill=black, scale=0.3},rvertex/.style={circle, draw=red, fill=red, scale=0.3}, sbvertex/.style={circle, draw=black, fill=black, scale=0.1}]
    % Define nodes
\node [bvertex](u) at (-3.5,0.5) {};
  \node [bvertex] (u1) at (-2.5,0.5) {};
   \node [bvertex] (u2) at (-1.5,0.5) {};
    \node [bvertex] (u3) at (-0.5,0.5) {};
     \node [bvertex] (u-1) at (1.5,0.5) {};
      \node [bvertex] (u0) at (2.5,0.5) {};
 \node [bvertex] (v) at (-3.5,-0.5) {};   
  \node [bvertex] (v1) at (-2.5,-0.5) {};
   \node [bvertex] (v2) at (-1.5,-0.5) {};
    \node [bvertex] (v3) at (-0.5,-0.5) {};
     \node [bvertex] (v-1) at (1.5,-0.5) {};
      \node [bvertex] (v0) at (2.5,-0.5) {}; 

  \draw (u)--node[font=\tiny, above=-0.8mm,text=red]{$1$}(u1);
    \draw (v)--node[font=\tiny, below=-0.8mm,text=red]{$1$}(v1);
      \draw (u1)--node[font=\tiny, above=-0.8mm,text=red]{$2$}(u2);
     \draw  (v-1)--node[font=\tiny, below=-0.8mm,text=red]{$1$}(v0);
      \draw(u-1)--node[font=\tiny, above=-0.8mm,text=red]{$1$}(u0);
    \draw (v1)--node[font=\tiny, below=-0.8mm,text=red]{$2$}(v2);
   
   \draw(v2)--node[font=\tiny, below=-0.8mm,text=red]{$1$}(v3);
      \draw  (u2)--node[font=\tiny, above=-0.8mm,text=red]{$1$}(u3);
      
      \draw (v2)--node[font=\tiny,  below left=1mm,text=red]{$4$}(u3);
        \draw (v3)--node[font=\tiny, below right=1mm,text=red]{$4$}(u2);

           \draw (v)--node[font=\tiny,  below left=1mm,text=red]{$4$}(u1);
        \draw (v1)--node[font=\tiny, below right=1mm,text=red]{$4$}(u);
        
         \draw (u)--node[font=\tiny, above,midway, text=red]{$3$}(v); 
      \draw (u3)--node[font=\tiny, above,midway,text=red]{$3$}(v3);
      \draw (u-1)--node[font=\tiny, above,midway,text=red]{$3$}(v-1);
      \draw (u2)--node[font=\tiny,  above,text=red]{$3$}(v2);
      \draw (u0)--node[font=\tiny, above, midway,text=red]{$3$}(v0);
      \draw (u1)--node[font=\tiny, above,midway,text=red]{$3$}(v1);
      
      \draw (u1)--node[font=\tiny, below right=1mm,text=red]{$5$}(v2);
    \draw (v-1)--node[font=\tiny, below right=1mm,text=red]{$4$}(u0);
    \draw (v1)--node[font=\tiny, below left=1mm,text=red]{$5$}(u2);
      \draw (u-1)--node[font=\tiny, below left=1mm,text=red]{$4$}(v0);
      
      \node (I1) at (-0.5,0) {};
      \draw [black] (u2)--node[font=\tiny, midway,text=red]{$6$}(I1);
      \node (I2) at (1.5,0) {};
      \draw [black] (v0)--(I2);
  \draw (u1)--node[font=\tiny, above left, xshift=-8pt, yshift=2pt,text=red]{$6$}(v3); 
  \draw (u)--node[font=\tiny, above left, xshift=-8pt, yshift=2pt,text=red]{$6$}(v2); 
      \node(I3) at (0,0.5){};
      \draw (u3)--(I3);

        \node(I4) at (0,-0.5){};
      \draw (v3)--(I4);

        \node(I5) at (1,0.5){};
      \draw (u-1)--(I5);

      \node(I6) at (1,-0.5){};
      \draw(v-1)--(I6);

\node[sbvertex] at (0.5,0) {};
\node[sbvertex] at (0.2,0) {};
\node[sbvertex] at (0.8,0) {};
\end{tikzpicture}}
     \caption{A properly $6$-edge-colored planar triangulation.}
    \label{fig:6colors}
\end{figure}

In this paper, we investigate the rainbow planar Tur\'an number of cycles, which is the next natural  planar graph class to consider. For $C_3$, since a $3$-cycle is a complete graph, we have that $\exP^*(n, C_3)=\exP(n,C_3)=2n-4$ for $n\geq 3$. The remaining open cases are $C_4, C_5, C_6$. It is surprising that there exists an $n$-vertex planar triangulation such that it has a proper edge-coloring with no rainbow $C_4$ for infinitely many $n$. For $C_5$ and $C_6$, we also find an $n$-vertex planar triangulation (see the even case in Figure~\ref{fig:6colors}) admitting a proper edge-coloring with no rainbow $C_5$ or $C_6$ for each $n\geq 3$.

\begin{theorem}\label{thm:C4}
For each integer $k\ge 5$ and $n=k^2-3k+2$, $\exP^*(n, C_4)=3n-6$.
\end{theorem}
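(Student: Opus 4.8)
\emph{Reduction.} The upper bound is free: any $n$-vertex planar graph ($n\ge 3$) has at most $3n-6$ edges by Euler's formula, so $\exP^*(n,C_4)\le 3n-6$. Thus the whole statement reduces to a construction: for $n=(k-1)(k-2)$ I must produce an $n$-vertex planar triangulation $T$ — which then has exactly $3n-6$ edges — together with a proper edge-coloring $c$ with no rainbow $C_4$. The first step is to rewrite the goal in a usable form. In a proper coloring the two edges meeting at each vertex of a $4$-cycle $v_1v_2v_3v_4$ already differ, so the cycle is non-rainbow precisely when one of its two pairs of \emph{opposite} edges is monochromatic, i.e. $c(v_1v_2)=c(v_3v_4)$ or $c(v_2v_3)=c(v_4v_1)$. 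Hence it suffices to build $T$ and a proper $c$ so that every $4$-cycle of $T$ has a monochromatic pair of opposite edges.

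\emph{The construction.} The equality $n=(k-1)(k-2)=(k-2)\cdot(k-1)$ points to a triangulated rectangular grid as the natural host. Take the $(k-2)\times(k-1)$ array of vertices $(i,j)$, join horizontal and vertical neighbors, insert one diagonal in each unit square, and finally triangulate the outer face by adding chords along its boundary polygon. A short Euler/boundary count confirms this is consistent: after inserting the square-diagonals the only non-triangular face is the outer boundary polygon, and the number of chords needed to triangulate it equals the deficit $3n-6-|E|$ between a triangulation and the grid (for $k=5$ both equal $7$). So $T$ is an $n$-vertex triangulation with $3n-6$ edges, and it remains only to color it.

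\emph{Coloring and verification.} One checks that every $4$-cycle lying in the grid part is the boundary of two triangles sharing an edge (the link of an interior vertex is a hexagon, so no $4$-cycle can enclose a vertex), and that the two opposite edges of such a ``diamond'' are always parallel, i.e. lie in a common edge-direction class (horizontal, vertical, or diagonal). This is what one exploits: color the three direction classes from disjoint palettes, and within each class let the color be a function of a single coordinate, tuned so that the opposite pair in each diamond matches while collinear neighbors differ. The verification then consists of (i) the count above, (ii) checking that $c$ is proper — a local computation at each vertex — and (iii) checking that every $4$-cycle, facial and boundary-induced, is non-rainbow.

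\emph{The main obstacle.} Step (iii) is the crux, and it is genuinely delicate for two reasons. First, a naive ``one coordinate per direction'' rule does not work: the down-right diagonal breaks the symmetry between horizontal- and vertical-shared diamonds, and one checks that the diamonds sharing a vertical edge force $c(v_1v_2)\ne c(v_3v_4)$ and $c(v_2v_3)\ne c(v_4v_1)$ simultaneously, producing a rainbow $C_4$. Resolving this requires a less symmetric scheme — for instance alternating the diagonal directions in a herringbone pattern, or replacing the per-coordinate rule by a Latin-square / $\mathbb{Z}_m$-type assignment — chosen so that in \emph{every} orientation of diamond one opposite pair is monochromatic, while keeping $c$ proper and the palette finite. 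Second, triangulating the outer boundary introduces further $4$-cycles, some of which may be separating, and these must be killed by the same coloring; this interaction between the boundary chords and the interior rule is, I expect, the reason the argument succeeds only for the discrete values $n=(k-1)(k-2)$ rather than for all $n$. I would therefore organize step (iii) as a finite case analysis over the $4$-cycle ``types'' of $T$ (interior diamonds in each orientation, plus the finitely many boundary configurations), verifying the monochromatic-opposite-pair property once per type and using the translation symmetry of the grid to cover all translates at once.
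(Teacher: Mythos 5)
There is a genuine gap here, and in fact two. First, the entire mathematical content of the theorem is the exhibition of a specific properly edge-colored triangulation with no rainbow $C_4$, and your proposal never produces one. Your reduction (a properly colored $4$-cycle is non-rainbow iff one pair of opposite edges is monochromatic) is correct, and the upper bound $\exP^*(n,C_4)\le 3n-6$ is indeed free; but for the lower bound you concede that the natural direction-based coloring of your grid produces rainbow $4$-cycles, and you only gesture at repairs (``herringbone,'' ``Latin-square / $\mathbb{Z}_m$-type assignment'') without defining any coloring or verifying any case. What remains is a research plan, not a proof.

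Second, and more decisively, the host graph you propose cannot work no matter how it is colored. If a planar triangulation on at least five vertices admits a proper edge-coloring with no rainbow $C_4$, then for every vertex $v$ the cycle through $N(v)$ must be rainbow (walk around the consecutive $4$-cycles $vv_iv_{i+1}v_{i+2}v$; this is Observation~\ref{obs:rainbow_neighbor}), so $G$ can have no vertex of degree $3$ or $4$: it must have minimum degree $5$, and in fact must be $4$-connected (Theorem~\ref{thm:C4_other}). Your graph violates this. In the $(k-2)\times(k-1)$ grid with one diagonal per cell, a non-corner boundary vertex has degree at most $5$, and with a consistent diagonal direction (the ``down-right diagonal'' you describe) exactly $4$, while corner vertices have degree at most $3$. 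Now any triangulation of the outer boundary polygon has at least two ears, and the apex of an ear receives no chord, so at least two boundary vertices retain their grid degree $\le 4$ in your final triangulation $T$. Hence \emph{every} proper edge-coloring of $T$ contains a rainbow $C_4$, and no amount of cleverness in step (iii) can succeed. This is precisely why the paper does not use a flat grid: its construction $H_k$ is a closed cylinder --- two poles $v_0$, $v_{k-2}$ joined respectively to the first and last of $k-3$ nested $k$-cycles, with consecutive cycles joined by a triangulated band --- which has minimum degree $5$ and is $5$-connected, so that every $4$-cycle consists of two facial triangles sharing an edge, and the non-rainbow check becomes a single local computation per edge under an explicit $(k+2)$-coloring. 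Your closing intuition that the boundary is the source of trouble is correct; the fix is not to triangulate the boundary but to eliminate it by wrapping the grid around, and that is where the special values $n=(k-1)(k-2)$ come from.
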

While we cannot give constructions of planar triangulations having a proper edge-coloring with no rainbow $C_4$ for other values $n$, we have the following result for those graphs. 
\begin{theorem}\label{thm:C4_other}
Let $G$ be a planar triangulation on at least five vertices such that $G$ has a proper edge-coloring  containing no rainbow $C_4$. Then $G$ has minimum degree $5$ and $G$ is $4$-connected.
\end{theorem}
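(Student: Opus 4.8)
The plan is to fix the given rainbow-$C_4$-free proper edge-coloring $c$ and prove two things: (i) $G$ has no vertex of degree $3$ or $4$, and (ii) $G$ has no separating triangle. Since every planar graph has a vertex of degree at most $5$, (i) forces $\delta(G)=5$; and since a triangulation on at least five vertices is $4$-connected exactly when it has no separating triangle, (ii) gives $4$-connectivity. For a vertex $v$ with neighbours $v_1,\dots,v_d$ in rotational order I write $\alpha_i=c(vv_i)$ (pairwise distinct, as $c$ is proper) and $\ell_i=c(v_iv_{i+1})$ (indices mod $d$), and recall that the link $v_1\cdots v_d$ is a cycle. Throughout I use that a properly coloured $4$-cycle fails to be rainbow only if its two opposite edges share a colour.

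The model computation rules out $d=4$. Each fan $4$-cycle $v_{i-1}v_iv_{i+1}v$ carries the colours $\ell_{i-1},\ell_i,\alpha_{i+1},\alpha_{i-1}$, so it is non-rainbow only if $\ell_{i-1}=\alpha_{i+1}$ or $\ell_i=\alpha_{i-1}$. I would read the four constraints as a covering problem: the edge $\ell_i$ can settle the constraint at index $i$ (via $\ell_i=\alpha_{i-1}$) or at index $i+1$ (via $\ell_i=\alpha_{i+2}$), but for $d\ge4$ not both, so covering all four constraints forces the map $i\mapsto i+\varepsilon_i$ to be a bijection of $\mathbb{Z}_d$ and hence all $\varepsilon_i$ equal. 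Thus either $\ell_i=\alpha_{i-1}$ for all $i$ or $\ell_i=\alpha_{i+2}$ for all $i$; either way the link $4$-cycle $v_1v_2v_3v_4$ receives four distinct spoke colours and is rainbow, a contradiction.

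For $d=3$ I would apply the same opposite-edge principle to the faces just outside the link. With link triangle $abc$, set $\alpha=c(va),\beta=c(vb),\gamma=c(vc)$ and $x=c(ab),y=c(bc),z=c(ca)$; since $n\ge5$, the edge $ab$ borders a second face $abw$ with $w\notin\{v,a,b,c\}$, giving a genuine $4$-cycle $vawb$. The three fan $4$-cycles give the inner constraints $x=\gamma$ or $y=\alpha$, $z=\beta$ or $y=\alpha$, $x=\gamma$ or $z=\beta$; and $vawb$ is forced rainbow precisely when the colours $\beta$ and $\alpha$ are already used at $a$ and $b$, i.e.\ when $(x=\beta\text{ or }z=\beta)$ and $y=\alpha$, with the analogous statements across $bc$ and $ca$. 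A three-line case check finishes: if $y=\alpha$, avoiding the outer cycle across $ca$ needs $x\ne\gamma$, so the inner constraint forces $z=\beta$, which makes the outer cycle across $ab$ rainbow; and if $y\ne\alpha$ the inner constraints force $x=\gamma$ and $z=\beta$, which makes the outer cycle across $bc$ rainbow. Hence $\delta(G)\ge5$, and in particular every separating triangle has at least two vertices on each side.

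The main obstacle is (ii) for a general separating triangle $T=xyz$, where both sides may be large and the edge-apexes need not coincide, so the collapse used for $d=4$ is unavailable. Writing $p=c(xy),q=c(yz),r=c(zx)$ (distinct) and letting $a,a'$ be the two apexes of $xy$, the $4$-cycles $xayz$, $xa'yz$ and $xaya'$ again reduce to opposite-edge equalities, and the leverage is that the spokes $c(xa),c(xa'),p,r$ at $x$ and $c(ya),c(ya'),p,q$ at $y$ are forced distinct. For instance, after a symmetry one may assume $c(xa)=q$; then $c(xa')\ne q$ forces $c(a'y)=r$ from $xa'yz$, and $q\ne r$ then forces $c(ay)=c(xa')$ from $xaya'$, so the constraints propagate to the opposite apex. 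I would iterate this propagation around the three edges of $T$ to return to an equality contradicting the distinctness of $p,q,r$, taking $T$ to minimise the number of vertices on one side so that no smaller separating triangle lies inside. The hard part I anticipate is the \emph{closure} of this propagation: unlike the finite fan and three-term arguments above, the apex colours on the two sides interact through the entire disk bounded by $T$, and arranging the minimality hypothesis so that the propagation actually terminates in a contradiction — rather than merely constraining the innermost layer — is the crux.
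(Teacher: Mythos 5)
Your minimum-degree half is correct. The covering/bijection argument for $d=4$ is sound (it re-proves, for that degree, the paper's observation that every vertex link must carry a rainbow cycle), and your degree-$3$ case checks out completely, including the point that $n\ge 5$ forces each outer apex $w$ to lie outside $\{v,a,b,c\}$. This part is a genuinely different, more local route than the paper's: the paper instead passes to a neighbour $v$ of the degree-$3$ vertex, notes $d(v)\ge 5$, applies its rainbow-link observation to pin down all colours around $v$, and then exhibits a rainbow $4$-cycle through the chord $v_2v_k$; your version never needs the full rainbow-link statement, only the six $4$-cycles near the degree-$3$ vertex.

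The genuine gap is the one you flagged yourself: the separating-triangle case is not proved, and the difficulty you anticipate (``interaction through the entire disk'') is a symptom of having chosen the wrong $4$-cycles, not an intrinsic obstacle. The consequence of interior-minimality you are missing is this: the three vertices of $T=v_1v_2v_3$ can have \emph{no common neighbour inside} $T$ (such a $u$ would either be the unique interior vertex, hence of degree $3$, contradicting $\delta(G)=5$, or one of the triangles $uv_iv_j$ would be a separating triangle with strictly smaller interior), and therefore the three \emph{inner} apexes $u_3$ (of $v_1v_2$), $u_1$ (of $v_2v_3$), $u_2$ (of $v_3v_1$) are pairwise distinct vertices. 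Once they are distinct, properness at the vertices of $T$ chains the three inner $4$-cycles together and the propagation closes after three steps, with no reference to the rest of the disk: writing $c(v_1v_2)=1$, $c(v_2v_3)=2$, $c(v_3v_1)=3$, the cycle $v_1u_3v_2v_3$ gives, without loss of generality, $c(v_1u_3)=2$; then $c(v_1u_2)\ne c(v_1u_3)=2$ forces $c(v_3u_2)=1$ from the cycle $v_1v_2v_3u_2$; then $c(v_3u_1)\ne c(v_3u_2)=1$ forces $c(v_2u_1)=3$ from the cycle $v_2u_1v_3v_1$. Now the colours $1,2,3$ all appear on edges at $v_1$ and all appear on edges at $v_2$, so the $4$-cycle $v_1wv_2v_3$ through the \emph{outer} apex $w$ of $v_1v_2$ is forced rainbow, a contradiction. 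Your setup spends the two apexes $a,a'$ of the single edge $xy$ on opposite sides of $T$; since minimality gives no control whatsoever over the outside of $T$, the outer apexes of different edges of $T$ need not be distinct or related, which is exactly why iterating your relations around the three edges does not terminate. Redirect the argument to the three inner apexes (distinct by minimality) plus a single outer apex, and it closes in four cycles.
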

\begin{theorem}\label{thm:C5}
For each $n\ge 3$ and each $k\ge 5$, $\exP^*(n, C_k)=3n-6$.
\end{theorem}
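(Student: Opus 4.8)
The upper bound $\exP^*(n,C_k)\le 3n-6$ is immediate, as every $n$-vertex planar graph has at most $3n-6$ edges, so the task is to build, for every $n\ge 3$ and $k\ge 5$, an $n$-vertex planar triangulation carrying a proper edge-coloring with no rainbow $C_k$; I plan to use one construction for all $k$. For even $n=2(m+1)$ take the triangulated strip $G$ with top path $u_0u_1\cdots u_m$, bottom path $v_0v_1\cdots v_m$, all verticals $u_iv_i$, both block diagonals $u_iv_{i+1}$ and $v_iu_{i+1}$, and the long diagonals $u_iv_{i+2}$; this is the graph underlying Figure~\ref{fig:6colors}, it has $6m=3n-6$ edges, and it is planar (each corner such as $v_0$ is a degree-$3$ vertex sitting inside the triangle on its neighbours), hence a triangulation. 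I color $u_iv_i$ with $3$, each rail edge $u_iu_{i+1},v_iv_{i+1}$ with $1$ or $2$ according to the parity of $i$, each block diagonal $u_iv_{i+1},v_iu_{i+1}$ with $4$ or $5$ according to the parity of $i$, and each long diagonal with $6$; checking the six edges at an interior vertex shows this is a proper $6$-edge-coloring. For odd $n$ I delete the degree-$3$ corner $v_0$, which leaves a triangulation on $n$ vertices with the inherited proper coloring and no new cycles (and for $n\le 4$ there is no cycle of length $\ge 5$ at all). Finally, for $k\ge 7$ a copy of $C_k$ has at least $7>6$ edges, so it cannot be rainbow under a $6$-coloring; this settles all $k\ge 7$.

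It remains to treat $k\in\{5,6\}$, and here the plan is to combine a counting reduction with a short local analysis. Call an edge \emph{crossing} if it joins the top path to the bottom path (the verticals, block diagonals and long diagonals, with colors $3$, $4/5$, $6$) and \emph{horizontal} otherwise (the rails, with colors $1/2$). Since all verticals share color $3$ and all long diagonals share color $6$, a rainbow cycle uses at most one vertical and at most one long diagonal; as the block diagonals furnish only colors $4,5$ and the rails only colors $1,2$, a rainbow cycle uses at most four crossing edges (at most one in each of the colors $3,4,5,6$) and at most two horizontal edges, so it has length at most $6$. Because a cycle crosses between the two paths an even number of times, its number of crossing edges is even, hence $2$ or $4$.

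For $C_5$ I expect a clean parity obstruction. The only admissible profile is four crossing edges (one of each color $3,4,5,6$) and one horizontal. Assigning to a crossing edge $u_iv_j$ the displacement $\delta=j-i$, the vertical has $\delta=0$, the long diagonal $\delta=2$, and the two block diagonals $\delta=\pm1$; tracing column indices around the cycle (the four crossings alternate in direction) forces $\delta^{(1)}-\delta^{(2)}+\delta^{(3)}-\delta^{(4)}=\pm1$. But $\delta^{(1)}+\delta^{(2)}+\delta^{(3)}+\delta^{(4)}=0+2\pm1\pm1$ is even, so the signed sum is even as well, contradicting $\pm1$. Hence no rainbow $C_5$ exists.

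For $C_6$ the only admissible profile is four crossing edges (colors $3,4,5,6$, displacements $0,2,\pm1,\pm1$) and two horizontals (one of color $1$, one of color $2$), and here the displacement parity no longer forbids closure. Instead I would bound the column span: consecutive vertices differ in column by at most $2$, and the total variation of the column index around the cycle is $|0|+|2|+1+1+1+1=6$, so all six vertices lie in four consecutive columns; since shifting every index by $2$ preserves the coloring, it then suffices to inspect the $6$-cycles meeting a bounded window of columns, in the two residues modulo $2$. A finite check of these cases—organized around the facts that the long diagonal $u_bv_{b+2}$ is the unique width-$2$ edge and that the single vertical fixes the middle column—shows every candidate repeats a color, so no rainbow $C_6$ exists. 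This last step is the main obstacle: unlike $C_5$ it has no parity shortcut and reduces to a genuine, if bounded, enumeration; the work is to keep that enumeration short by exploiting the width-$2$ long diagonal, the forced single vertical, and the $2$-periodicity of the colors.
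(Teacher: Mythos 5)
Your construction is exactly the paper's $F_n$ (up to a permutation of color names: the paper gives verticals color $5$, block diagonals $3/4$, long diagonals $6$), your upper bound and the reduction for $k\ge 7$ are correct, and your $C_5$ argument is complete and in fact cleaner than the paper's. Where the paper rules out a rainbow $C_5$ by a distance/index case analysis (first bounding the index span, then eliminating span $3$, then span $2$, case by case), your parity argument settles everything at once: a rainbow $C_5$ is forced to consist of four crossing edges with colors $\{3,4,5,6\}$ plus one rail, the alternating traversal gives $\delta^{(1)}-\delta^{(2)}+\delta^{(3)}-\delta^{(4)}=\pm 1$, yet the displacement multiset $\{0,2,\pm 1,\pm 1\}$ has even sum and hence even signed sum. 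I checked this closure computation and it is sound; that half of the $k=5,6$ analysis stands and is a genuine improvement in economy over the paper's.

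The gap is the $C_6$ case, and you named it yourself. Your reduction is legitimate: the profile is forced (one vertical, one long diagonal, two block diagonals of distinct colors, two rails of distinct colors), the total variation $0+2+1+1+1+1=6$ confines the cycle to four consecutive columns, and $2$-periodicity of the coloring leaves finitely many configurations to inspect. But the sentence ``a finite check of these cases shows every candidate repeats a color'' is an assertion, not a proof, and it is precisely where the substance lies: the paper's lemma devotes roughly half of its proof to exactly this enumeration, tracking which edges are forced into the would-be rainbow $6$-cycle $D$ (e.g.\ the unique color-$6$ edge in the window must lie in $D$, then $v_i\notin V(D)$, then $u_{i+3}\notin V(D)$, etc.) and exhibiting a repeated color in every branch. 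Until you execute that enumeration---for instance, fix the long diagonal as $u_0v_2$ by periodicity and symmetry, branch on the location of the unique vertical edge, and check each completion against the two block-diagonal and two rail colors---the theorem is proven only for $k=5$ and $k\ge 7$. Nothing in your outline is false (the paper's analysis confirms that every candidate does repeat a color), but as written the $k=6$ case of the statement remains open in your proposal.
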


This paper is organized as follows. In section 2, we first show Theorem~\ref{thm:C4_other}, and then we describe an $n$-vertex planar triangulation with a proper edge-coloring containing no rainbow $C_4$ for $n=k^2-3k+2$ with each $k\ge 5$.  In section 3, we give an $n$-vertex triangulation with a proper edge-coloring  such that it contains no rainbow $C_5$ or $C_6$, which implies Theorem~\ref{thm:C5}.

\medskip
We conclude this section with some terminology and notation. For any positive integer
$k$ let $[k] = \{1, 2, \ldots , k\}$, and for positive integers $s,t$ with $s<t$ let $[s,t]=\{s, s+1, \ldots, t\}$. 

Let $G$ be a graph. For $v\in V(G)$, we use $N_G(v)$ (respectively, $N_G[v]$) to denote the neighborhood (respectively, closed neighborhood) of $v$, and use $d_G(v)$ to denote $|N_G(v)|$. For distinct vertices $u,v$ of $G$, we use $d_G(u,v)$ to denote the distance between $u$ and $v$. (If there is no confusion we omit the reference to $G$.) For any $S\subseteq V(G)$, we use $G[S]$ to denote the subgraph of $G$ induced by $S$, and let $G-S = G[V(G)\backslash S]$. For a subgraph $T$ of $G$, we often write $G-T$ for $G-V(T)$ and write $G[T]$ for $G[V(T)]$. A path (respectively, cycle) is often represented as a sequence (respectively, cyclic sequence) of vertices, with consecutive vertices 
being adjacent. A cycle $C$ in a graph $G$ is said to be \textit{separating} if the graph obtained from $G$ by deleting vertices in $C$ is not connected. 

Let $G$ be a plane graph. For a cycle $C$ in $G$, we use $\text{Int}_G(C)$ and $\text{Ext}_G(C)$ to denote the interior and the exterior of this cycle $C$ in $G$, respectively. 
\section{Rainbow planar {Tur{\'a}n} number of $C_4$}
Before we prove Theorem~\ref{thm:C4_other}, we have the following observation for properly edge-colored planar triangulations with no rainbow $C_4$.
\begin{observation}\label{obs:rainbow_neighbor}
  Let $G$ be a properly edge-colored planar triangulation on at least four vertices with no rainbow $C_4$. Then for any vertex $v$ in $G$, $G[N(v)]$ has a rainbow cycle using all vertices of $N(v)$.   
\end{observation}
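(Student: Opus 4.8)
The plan is to take the candidate cycle to be the \emph{link} of $v$ and prove it is rainbow. Since $G$ is a triangulation on at least four vertices it is $3$-connected, so the neighbours of $v$ can be listed in their rotation order $u_1,u_2,\dots,u_d$ (indices mod $d$) in such a way that every $vu_iu_{i+1}$ bounds a face; in particular $C=u_1u_2\cdots u_du_1$ is a cycle in $G[N(v)]$ through every vertex of $N(v)$. Because the coloring is proper at $v$, the spoke colors are pairwise distinct; write $s_i=c(vu_i)$ and $\ell_i=c(u_iu_{i+1})$. First I would record the one local constraint the hypothesis supplies: for each $i$ the four-cycle $v\,u_{i-1}\,u_i\,u_{i+1}\,v$ is a $C_4$, and since adjacent edges receive distinct colors, only its two pairs of opposite edges can be monochromatic; hence no-rainbow-$C_4$ forces $s_{i-1}=\ell_i$ or $\ell_{i-1}=s_{i+1}$. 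This dichotomy is the engine of the whole argument.

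Then I would show $C$ is rainbow by contradiction. Suppose two link edges share a color, and choose such a pair $\ell_a=\ell_b=c$ whose cyclic gap is as small as possible; properness makes the gap at least $2$, and minimality makes $\ell_{a+1},\dots,\ell_{b-1}$ pairwise distinct and distinct from $c$. The key point is that the edge of color $c$ shares an endpoint with each of the four nearby spokes, so $c\notin\{s_a,s_{a+1},s_b,s_{b+1}\}$; consequently, in the local constraint at the positions just inside the short arc the branch "$c$ equals a spoke" is unavailable and each such constraint is driven into its other branch. Running two consecutive such constraints then forces the single intermediate link edge to equal two \emph{distinct} spoke colors, whence those two spokes coincide, contradicting the distinctness of the spokes. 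I have checked this collapse explicitly when the gap is $2$ (the constraints flanking the middle edge force it to equal two different spoke colors), and the plan is to propagate the same argument along the short arc for a general gap.

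Finally I would clear the small-degree cases. If $d=3$ the link is a triangle, rainbow by properness, so nothing is needed. If $d=4$ then two of the neighbours have degree two in $G[N(v)]$, so the \emph{only} Hamiltonian cycle of $G[N(v)]$ is the link itself; as that cycle is a $C_4$ it cannot be rainbow, which simply means no vertex of degree $4$ occurs under the hypothesis, consistent with (and a first step toward) Theorem~\ref{thm:C4_other}. The substantive content is thus the case $d\ge 5$, and I expect the main obstacle to be the bookkeeping in the propagation step for a general gap: one must verify that minimality really does strip every "$c$ equals a spoke" branch along the arc and that the surviving branches cannot be satisfied together without equating two spokes. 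Should the direct propagation become unwieldy, a robust fallback is an extremal argument on a longest rainbow cycle in $G[N(v)]$, using the same local $C_4$-constraint together with the chords of $C$ furnished by the exterior triangulation to reroute the cycle through any omitted neighbour.
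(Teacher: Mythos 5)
Your setup is sound (the link cycle, and the constraint that each $4$-cycle $vu_{i-1}u_iu_{i+1}v$ forces $s_{i-1}=\ell_i$ or $\ell_{i-1}=s_{i+1}$), and your gap-$2$ computation is correct. But the step you defer as ``bookkeeping'' is a genuine gap, and in fact the claim it rests on is false: minimality of the gap does \emph{not} strip the ``$c$ equals a spoke'' branches once the gap is at least $3$. Those branches involve the spokes $s_{a+2}$ (in the constraint at $a+1$) and $s_{b-1}$ (in the constraint at $b$), while properness only gives $c\notin\{s_a,s_{a+1},s_b,s_{b+1}\}$; for gap $\geq 3$ the spokes $s_{a+2},s_{b-1}$ are not in that list, and nothing prevents $s_{a+2}=c$. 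Concretely, at a vertex of degree $d\geq 6$ take gap $3$ and set $\ell_a=\ell_{a+3}=0$, $s_a=1$, $s_{a+1}=2$, $s_{a+2}=0$, $s_{a+3}=3$, $s_{a+4}=4$, $\ell_{a+1}=3$, $\ell_{a+2}=5$. This is proper along the arc, the interior link colors are distinct from each other and from $0$ (so minimality is respected), and all three arc constraints hold: the $4$-cycles at positions $a+1$, $a+2$, $a+3$ repeat the color pairs $(\ell_a,s_{a+2})$, $(\ell_{a+1},s_{a+3})$, $(s_{a+2},\ell_{a+3})$ respectively. So no contradiction can be extracted from the constraints along the short arc alone; ruling out such configurations requires information from outside the arc, which your strategy never invokes. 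The fallback you sketch (rerouting a longest rainbow cycle using chords) is too vague to assess.

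This is precisely where the paper's argument differs: it is not a local contradiction argument but a global propagation around the whole link. After one WLOG choice at the first $4$-cycle ($c(v_2v_3)=c(vv_1)$), each successive $4$-cycle $vv_iv_{i+1}v_{i+2}v$ has one branch already killed by the color just determined, so the other branch is forced; inductively $c(v_iv_{i+1})=c(vv_{i-1})$ for every $i$ (mod $k$), i.e.\ the link colors are a cyclic shift of the pairwise distinct spoke colors, hence the link is rainbow. Note this also handles $d=4$ cleanly: the propagation makes the link a rainbow $C_4$, contradicting the hypothesis, so degree-$4$ vertices simply do not exist. Your treatment of $d=4$ (``nothing is needed'') has the logic backwards --- the observation asserts its conclusion for \emph{every} vertex, so you must derive the nonexistence of degree-$4$ vertices, not assume it; your own gap-$2$ argument does yield this, but only if you run it for $d=4$ and draw the contradiction explicitly. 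To repair your proof, replace the minimal-gap device with the paper's global forcing (or otherwise show that once one constraint resolves in a given direction, all constraints around the entire cycle are forced in turn).
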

\begin{proof}
    Suppose $G$ is a planar triangulation on at least four vertices and it has a proper edge-coloring $c$ with no rainbow $C_4$. Let $v$ be a vertex in $G$. Since $G$ is edge-maximal and $G\ne K_3$, $v$ has degree at least three and all neighbors of $v$ are contained in a cycle. Assume that $d(v)=k$ for some $k\ge 3$ and $N(v)=\{v_1, v_2, \ldots, v_k\}$, where $C=v_1v_2\ldots v_k v_1$ is a cycle in $G$. To show $G[N(v)]$ has a rainbow cycle containing all vertices of $N(v)$, it suffices to show that $C$ is rainbow. We may now assume $d(v)=k\geq 4$ as a properly edge-colored triangle is always rainbow. Since $G$ is properly edge-colored, we may assume that the edge $vv_i$ is colored with the color $i$ for each $i\in [k]$. For each $i\in [k]$, let $D_i:=vv_{i}v_{i+1}v_{i+2}v$, where the indices are same under modulo $k$. Note that each $D_i$ is a $4$-cycle and $G$ has no rainbow $C_4$. Since $D_1=vv_1v_2v_3v$ is not rainbow, either $c(v_1v_2)=c(vv_3)=3$ or $c(v_2v_3)=c(vv_1)=1$. Without loss of generality, we may assume the edge $v_2v_3$ is colored with the color $1$. Then observe that $D_2=vv_2v_3v_4v$ is a $4$-cycle with $c(vv_2)=2, c(v_2v_3)=1, c(vv_4)=4$. It follows that the edge $v_3v_4$ has the same color as $vv_2$, i.e., $c(v_3v_4)=c(vv_2)=2$. Similarly, we have that $c(v_iv_{i+1})=i-1$ for each $i\in [2,k-1]$ and $c(v_kv_1)=k-1, c(v_1v_2)=k$. Thus $C$ is rainbow, and this completes the proof.
\end{proof}
\begin{proof}[Proof of Theorem~\ref{thm:C4_other}] Suppose $G$ is a planar triangulation such that $n=|V(G)|\ge 5$ and it has a proper edge-coloring $c$ with no rainbow $C_4$. It follows from Observation~\ref{obs:rainbow_neighbor} that $G$ has no degree $4$ vertices. Since $G$ has at least five vertices and exactly $3n-6$ edges, the maximum degree of $G$ is at least four. Hence, $G$ has a vertex of degree at least $5$.
To show $G$ has minimum degree $5$, we need to show $G$ has no vertex of degree $3$. Suppose $G$ has a degree $3$ vertex, say $u$. Observe that all neighbors of $u$ have degree at least $5$ in $G$. Let $v$ be a neighbor of $u$. Suppose $d(v)=k$ for some $k\geq 5$ and $N(v)=\{v_1, v_2, \ldots, v_k\}$ such that $C=v_1v_2\ldots v_k$ is a cycle in $G$. Without loss of generality, we may assume $v_1=u$ and so $v_kv_2\in E(G)$. By Observation~\ref{obs:rainbow_neighbor}, we may assume that $c(vv_i)=c(v_{i+1}v_{i+2})=i$ for each $i\in [k]$, where the indices are same under modulo $k$. We consider the color for the edge $v_2v_k$. Note that $c(vv_2)=2,c(v_2v_3)=1, c(v_kv_1)=k-1$. Hence $c(v_2v_k)\notin \{1, 2,k-1\}$. This implies the $4$-cycle $v_kv_1vv_2v_k$ is rainbow, giving a contradiction. Therefore, every vertex in $G$ has degree at least $5$. Since every planar graph has minimum degree at most $5$, it follows that $G$ has minimum degree $5$.
\begin{figure}[htb]
    \centering
    \resizebox{3cm}{!}
    {\begin{tikzpicture}[scale=1, wvertex/.style={circle, draw=red, fill=red, scale=0.3}, bvertex/.style={circle, draw=black, fill=black, scale=0.3},rvertex/.style={circle, draw=red, fill=red, scale=0.3}, sbvertex/.style={circle, draw=black, fill=black, scale=0.1}]
    % Define nodes
    
    \node [bvertex, label={[font=\small]  left, above:$v_1$}] (v1) at (90:2) {};

    \node [bvertex, label={[font=\small]  below:$v_{2}$}](v2) at (330:2) {};
    \node [bvertex, label={[font=\small] below:$v_{3}$}](v3) at (210:2) {};
    \node [bvertex, label={[font=\small]  below:$u_{3}$}](u3) at (30:0.5) {};
    \node [bvertex, label={[font=\small] below:$u_2$}](u2) at (150:0.5) {};
    \node [bvertex, label={[font=\small] above:$u_1$}](u1) at (270:0.5) {};
    %\node [bvertex](v6) at (-3.5,0.5) {};
   % \node [bvertex, label={[font=\normalsize] left:$v$}](v7) at (-4.0,0.5) {};

 \node [bvertex, label={[font=\small] right, above:$w$}](w) at (30:1.5) {};

   \draw (v1) -- node[font=\tiny, midway,text=red]{$1$}(v2) --node[font=\tiny, midway, text=red]{$2$} (v3)--node[font=\tiny, midway, text=red]{$3$} (v1);

\draw (v1)--node[font=\tiny, midway,text=red]{$2$}(u3)--(v2);
\draw (v2)--node[font=\tiny, midway,text=red]{$3$}(u1)--(v3);
\draw (v3)--node[font=\tiny, midway,text=red]{$1$}(u2)--(v1);
\draw (v1)--(w)--(v2);

     % \node (I1) at (-4.5,-0.5) {};
     % \node (I2) at (4.5,0.5){};
     % \draw (v7) -- (I1);\draw (I2) -- (x7);
        
\end{tikzpicture}}
     \caption{The separating triangle $T$}
    \label{fig:triangle}
\end{figure}
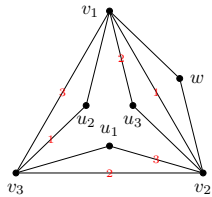

Next we show that $G$ is $4$-connected. Suppose not. Then $G$ contains a separating triangle. Consider a plane embedding of $G$. We choose a separating triangle $T:=v_1v_2v_3v_1$ of $G$ such that its interior is vertex-minimum. We claim that $v_1, v_2,v_3$ cannot have a common neighbor in the interior of $T$. Suppose $u$ is a common neighbor of $v_1, v_2, v_3$ in $\text{Int}(T)$. Then we have that either $u$ is a degree $3$ vertex in $G$ or there is a separating triangle with fewer vertices in its interior, a contradiction. Let $u_{6-i-j}$ denote the common neighbor of $v_i$ and $v_j$ in $\text{Int}(T)$ for $i,j$ with $1\le i <j \le 3$. Note that $v_1, v_2$ must have a common neighbor, say $w$, in $\text{Ext}(T)$. We may assume that $c(v_1v_2)=1, c(v_2v_3)=2$ and $c(v_3v_1)=3$. Since the $4$-cycle $v_1u_3v_2v_3v_1$ is not rainbow, either $c(v_1u_3)=c(v_2v_3)=2$ or $c(v_2u_3)=c(v_3v_1)=3$. without loss of generality, we may assume that $c(v_1u_3)=c(v_2v_3)=2$. Note that $v_1v_2v_3u_2v_1$ is not rainbow and $c(v_1u_2)\neq c(v_1u_3)=2=c(v_2v_3)$. It follows that $c(v_3u_2)=c(v_1v_2)=1$. Similarly, we have that $c(v_2u_1)=3$. Thus $c(v_1w)\notin\{1,2,3\}$ and $c(v_2w)\notin\{1,2,3\}$. This implies that the $4$-cycle, $v_1wv_2v_3v_1$, is rainbow, a contradiction. Therefore, $G$ has no separating triangle and so $G$ is $4$-connected.
\end{proof}
Now we want to give a planar triangulation such that it has a proper edge-coloring with no rainbow $C_4$. By Theorem~\ref{thm:C4_other}, we know that this planar triangulation $G$ should have minimum degree $5$ and connectivity at least $4$. If $G$ is $5$-connected, will this be helpful? We know that $5$-connected planar triangulations have no separating $4$-cycles, and so every $4$-cycle in $5$-connected planar triangulation is formed by two adjacent facial triangles. Hence $G$ has exactly $3|V(G)|-6$ many $4$-cycles for $5$-connected $G$. This together with Observation~\ref{obs:rainbow_neighbor} motivates us to give the following construction. 

\begin{figure}[htb]
    \centering
    \resizebox{6cm}{!}
    {\begin{tikzpicture}[scale=1, wvertex/.style={circle, draw=red, fill=red, scale=0.3}, bvertex/.style={circle, draw=black, fill=black, scale=0.3},rvertex/.style={circle, draw=red, fill=red, scale=0.3}, sbvertex/.style={circle, draw=black, fill=black, scale=0.1}]
    % Define nodes
\node[sbvertex] at (0.5,2.4) {};
\node[sbvertex] at (0.3,2.4) {};
\node[sbvertex] at (0.1,2.4) {};

\node[sbvertex] at (1,1.5) {};
\node[sbvertex] at (0.75,1.5) {};
\node[sbvertex] at (0.5,1.5) {};

\node[sbvertex] at (1,0.5) {};
\node[sbvertex] at (0.75,0.5) {};
\node[sbvertex] at (0.5,0.5) {};

% \node[sbvertex] at (1,-0.5) {};
% \node[sbvertex] at (0.75,-0.5) {};
% \node[sbvertex] at (0.5,-0.5) {};

\node[sbvertex] at (1,-1.5) {};
\node[sbvertex] at (0.75,-1.5) {};
\node[sbvertex] at (0.5,-1.5) {};

\node[sbvertex] at (0.6,-2.4) {};
\node[sbvertex] at (0.4,-2.4) {};
\node[sbvertex] at (0.2,-2.4) {};

\node[sbvertex] at (-2,-0.3) {};
\node[sbvertex] at (-2, -0.5) {};
\node[sbvertex] at (-2,-0.7) {};

\node[sbvertex] at (-1,-0.3) {};
\node[sbvertex] at (-1, -0.5) {};
\node[sbvertex] at (-1,-0.7) {};

\node[sbvertex] at (2.5,-0.3) {};
\node[sbvertex] at (2.5, -0.5) {};
\node[sbvertex] at (2.5,-0.7) {};

    \node [bvertex, label={[font=\small] above:$v_0$}] (v0) at (0,3) {};
    \node [bvertex, label={[font=\small] below:$v_{k-2}$}](v-2) at (0,-3) {};
    
    \node [bvertex, label={[font=\small] left:$v_{1,1}$}](v11) at (-2.5,2) {};
    \node [bvertex](v12) at (-1.5,2) {};
    \node [bvertex](v13) at (-0.5,2) {};
    \node [bvertex](v1-1) at (2,2) {};
    \node [bvertex, label={[font=\small] right:$v_{1,k}$}](v10) at (3,2) {};
    %\node [bvertex](v6) at (-3.5,0.5) {};
   % \node [bvertex, label={[font=\normalsize] left:$v$}](v7) at (-4.0,0.5) {};

  \node [bvertex, label={[font=\small] left:$v_{2,1}$}](v21) at (-2.5,1) {};
    \node [bvertex](v22) at (-1.5,1) {};
    \node [bvertex](v23) at (-0.5,1) {};
    \node [bvertex](v2-1) at (2,1) {};
    \node [bvertex, , label={[font=\small] right:$v_{2,k}$}](v20) at (3,1) {};

      \node [bvertex, label={[font=\small] left:$v_{3,1}$}](v31) at (-2.5,0) {};
    \node [bvertex](v32) at (-1.5,0) {};
    \node [bvertex](v33) at (-0.5,0) {};
    \node [bvertex](v3-1) at (2,0) {};
    \node [bvertex,, label={[font=\small] right:$v_{3,k}$}](v30) at (3,0) {};

       \node [bvertex,label={[font=\small] left: $v_{k-4,1}$}](v-41) at (-2.5,-1) {};
    \node [bvertex](v-42) at (-1.5,-1) {};
    \node [bvertex](v-43) at (-0.5,-1) {};
    \node [bvertex](v-4-1) at (2,-1) {};
    \node [bvertex, label={[font=\small] right:$v_{k-4,k}$}](v-40) at (3,-1) {};   

   \node [bvertex, label={[font=\small] left:$v_{k-3,1}$}](v-31) at (-2.5,-2) {};
    \node [bvertex](v-32) at (-1.5,-2) {};
    \node [bvertex](v-33) at (-0.5,-2) {};
    \node [bvertex](v-3-1) at (2,-2) {};
    \node [bvertex, label={[font=\small] right:$v_{k-3,k}$}](v-30) at (3,-2) {};

    \draw[-] (v11) .. controls (-0.75,4) and (1.25,4) .. (v10);
    
    \draw[-] (v-31) .. controls (-0.75,-4) and (1.25,-4) .. (v-30);
    
   \draw (v11) -- (v12) -- (v13);
    \draw (v21) -- (v22) -- (v23);
     \draw (v31) -- (v32) -- (v33);
      \draw (v-41) -- (v-42) -- (v-43);
       \draw (v-31) -- (v-32) -- (v-33);

    \draw (v1-1)--(v10);
       \draw (v2-1)--(v20);
          \draw (v3-1)--(v30);
             \draw (v-4-1)--(v-40);
                \draw (v-3-1)--(v-30);

\draw (v0)--(v11)--(v21)--(v31);
\draw (v0)--(v12)--(v22)--(v32);
\draw (v0)--(v13)--(v23)--(v33);
\draw (v0)--(v1-1)--(v2-1)--(v3-1);
\draw (v0)--(v10)--(v20)--(v30);

\draw (v-2)--(v-31)--(v-41);
\draw (v-2)--(v-32)--(v-42);
\draw (v-2)--(v-33)--(v-43);
\draw (v-2)--(v-3-1)--(v-4-1);
\draw (v-2)--(v-30)--(v-40);

\draw (v21)--(v12);
\draw (v31)--(v22)--(v13);
\draw (v32)-- (v23);
\draw (v2-1)--(v10);
\draw (v3-1)-- (v20);
\draw (v-31)--(v-42);
\draw (v-32)--(v-43);
\draw (v-3-1)--(v-40);
     % \node (I1) at (-4.5,-0.5) {};
     % \node (I2) at (4.5,0.5){};
     % \draw (v7) -- (I1);\draw (I2) -- (x7);
        
\end{tikzpicture}}
     \caption{Part of $H_k$.}
    \label{fig:C_4}
\end{figure}
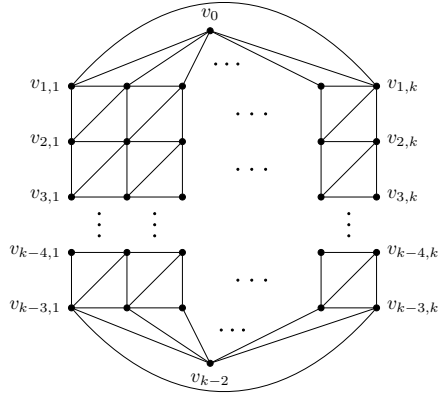

We define a planar triangulation $H_k$ for each $k\ge 5$ such that $H_k$ has $k^2-3k+2$ vertices. Let $$V(H_k):=\{v_0, v_{k-2}\} \cup \{v_{i,j}: i\in [k-3], j\in [k]\}.$$
Now we define the edges of $H_k$. We may assume that all numbers for the index $j$ are same under modulo $k$. Let $v_{i,j}$ be adjacent to $v_{i, j+1}$ for every $i\in [k-3]$, i.e., the vertices $v_{i,1}, v_{i,2},\ldots v_{i,k}$ form  a $k$-cycle in $H_k$, denoted by $D_i$. Moreover, for each $i\in [k-4]$ we let $v_{i,j} v_{i+1,j}\in E(H_k)$ and $v_{i,j}v_{i+1,j-1}\in E(H_k)$. Let $v_0$ be adjacent to all vertices of $D_1$ and $v_{k-2}$ be adjacent to all vertices of $D_{k-3}$. For convenience, let $v_{0,j}=v_0$ and $v_{k-2,j}=v_{k-2}$ for each $j\in [k]$. Hence, $v_{0,j}v_{1,j}=v_0v_{1,j}$ and $v_{k-3,j}v_{k-2,j}=v_{k-3,j}v_{k-2}$.

\begin{observation}
For each $k\ge 5$, $H_k$ has the following properties.
    \begin{itemize}
      \item [(i)] $H_k$ is a $5$-connected planar triangulation on $(k-3)k+2$ vertices.
    \item [(ii)] $v_0,v_{k-2}$ both have degree $k$, $v_{i,j}$ has degree $5$ for $i\in \{1, k-3\}$, and $v_{i,j}$ has degree $6$ for $i\in [2, k-4]$ if $k\ge 6$.
    \end{itemize}
\end{observation}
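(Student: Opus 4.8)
The plan is to read the vertex count and degree sequence directly off the definition of $H_k$, then to establish planarity and the triangulation property from the concentric embedding, and finally to prove $5$-connectivity; this last point is where essentially all the work lies.

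First I would record neighborhoods. The count $|V(H_k)| = 2 + (k-3)k = (k-3)k+2$ is immediate. For (ii), each interior-row vertex $v_{i,j}$ with $2\le i\le k-4$ has the two cyclic neighbors $v_{i,j\pm1}$ in $D_i$, the two neighbors $v_{i-1,j},v_{i-1,j+1}$ in the row above (the second because $v_{i-1,j+1}v_{i,j}$ is a diagonal edge), and the two neighbors $v_{i+1,j},v_{i+1,j-1}$ below, giving degree $6$; this case is nonempty exactly when $k\ge 6$. For $i\in\{1,k-3\}$ one adjacent row is replaced by a single apex, so one such pair collapses and the degree drops to $5$, while $v_0$ and $v_{k-2}$ are joined to all $k$ vertices of $D_1$, $D_{k-3}$ respectively. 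Summing degrees gives $2k + 2\cdot 5k + 6k(k-5) = 6k^2-18k$, so $|E(H_k)| = 3k^2-9k = 3n-6$ with $n=(k-3)k+2$. (As a sanity check, $H_5$ is the icosahedron.)

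Second, for planarity and the triangulation property I would use the concentric embedding of Figure~\ref{fig:C_4}: place $v_0$ innermost, the cycles $D_1,\dots,D_{k-3}$ as nested circles, and $v_{k-2}$ in the outer face. The top and bottom are fans of $k$ triangles, and each annulus between $D_i$ and $D_{i+1}$ is cut into $2k$ triangles, the quadrilateral cell $v_{i,j}v_{i,j+1}v_{i+1,j+1}v_{i+1,j}$ being split by the diagonal $v_{i,j+1}v_{i+1,j}$. Hence every face is a triangle, so $H_k$ is a planar triangulation, consistent with the edge count above.

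The main work is $5$-connectivity, and the plan is to invoke the standard fact that the connectivity of a planar triangulation equals the minimum length of a separating cycle, so it suffices to rule out separating triangles and separating $4$-cycles. Since every edge joins vertices in the same row or in consecutive rows, every triangle lies in two consecutive levels; a short case check (using $k\ge 5$, so each $D_i$ is chordless and $v_0,v_{k-2}$ are nonadjacent) shows every triangle is one of the listed faces, so there is no separating triangle. For $4$-cycles avoiding both apexes I would assign to each vertex its angular index $j\in\mathbb{Z}/k$: every edge shifts $j$ by at most $1$, so a closed walk of length $4$ has total shift $\equiv 0\pmod k$ of absolute value at most $4<k$, hence shift $0$; the $4$-cycle does not wind around the cylinder and lies in a region isomorphic to a patch of the triangular lattice, where (by the step-vector classification) every $4$-cycle is a rhombus bounding exactly two faces and so has empty interior. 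Finally, $4$-cycles through an apex have a bounded number of forms, since $N(v_0)=V(D_1)$ lies in one row, each closed off by a chord into two faces; and the two apexes cannot share a $4$-cycle because $d(v_0,v_{k-2})\ge k-2\ge 3$.

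The step I expect to be the main obstacle is making the ``no winding'' argument fully rigorous, namely justifying cleanly that a contractible $4$-cycle really does sit inside a lattice patch bounding an empty region, together with pinning down the precise triangulation-connectivity lemma I invoke; the triangle and apex cases, and all the counting, should be routine.
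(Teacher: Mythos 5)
Your proposal addresses a statement that the paper does not prove at all: it is recorded as an Observation, with the vertex count and degree list meant to be read off the construction (exactly your first paragraph), and the $5$-connectivity left as a routine check; the paper only uses the consequence later, namely that a $5$-connected triangulation has no separating $4$-cycle, so that every $4$-cycle bounds two facial triangles. Your frame for the missing argument is the right one: reduce $5$-connectivity to the nonexistence of separating triangles and separating $4$-cycles via the standard fact that a minimum vertex cut of a planar triangulation induces a cycle, rule out non-facial triangles by the two-consecutive-rows observation, handle $4$-cycles through an apex by a finite check (note $D_1$ is chordless, so it has no chords at all; such a cycle is $v_0v_{1,j}v_{2,j}v_{1,j+1}v_0$ or $v_0v_{1,j}v_{1,j+1}v_{1,j+2}v_0$, each bounding exactly two faces), and handle the remaining $4$-cycles by the winding argument. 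All of your counting is correct, and $H_5$ is indeed the icosahedron.

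The step you flag is a genuine gap as written, because the annulus subgraph on the rows $D_1,\dots,D_{k-3}$ is \emph{not} a patch of the triangular lattice: vertices of $D_1$ and $D_{k-3}$ have only four neighbours inside the annulus, so ``every $4$-cycle in a lattice patch is a rhombus'' does not apply verbatim near the boundary rows. Here is a concrete way to close it. Once the winding number is $0$, the $4$-cycle bounds a closed disk $\Delta$ contained in the annulus. Every vertex interior to $\Delta$ has all of its $H_k$-neighbours in $\Delta$; in particular, a vertex of $D_1$ or $D_{k-3}$ interior to $\Delta$ would force $v_0$ or $v_{k-2}$ into $\Delta$, which is impossible, so every interior vertex of $\Delta$ has degree exactly $6$. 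Now apply the Gauss--Bonnet identity for triangulated disks, $\sum_{v\ \mathrm{interior}}\bigl(6-d(v)\bigr)+\sum_{v\in\partial\Delta}\bigl(4-d(v)\bigr)=6$. If $\Delta$ had an interior vertex, the first sum would be $\le 0$, forcing the boundary sum to be $\ge 6$. But a boundary vertex of degree $2$ forces the chord joining its two neighbours: two adjacent degree-$2$ boundary vertices would force the two crossing diagonals of the quadrilateral, and an opposite degree-$2$ pair forces $\Delta$ to consist of exactly two triangles (no interior vertices). Hence at most one boundary vertex has degree $2$, and the boundary sum is at most $2+1+1+1=5$, a contradiction. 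So $\Delta$ consists of two faces and the $4$-cycle is not separating. (Alternatively, lift the contractible $4$-cycle to the universal cover of the annulus, which \emph{is} a lattice strip, and run your step-vector classification there; the same apex observation is still needed to control the boundary rows.)
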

Theorem~\ref{thm:C4} follows from the following result.
\begin{lemma}
    $H_k$ has a proper edge-coloring with no rainbow $C_4$.
\end{lemma}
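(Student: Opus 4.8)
The plan is to reduce the statement to a purely local condition and then to exhibit one explicit proper edge-coloring. First I would use that $H_k$ is a $5$-connected planar triangulation (as recorded in the observation about $H_k$ above): this forces every $C_4$ of $H_k$ to bound two triangular faces sharing an edge, i.e. every $C_4$ is the ``spoke diamond'' of some vertex. Concretely, for an internal edge $xy$ whose two incident faces are $xyp$ and $xyq$, the $4$-cycle $xpyq$ is \emph{non-rainbow} precisely when $c(xp)=c(yq)$ or $c(xq)=c(yp)$. Thus it suffices to produce a proper edge-coloring of $H_k$ in which this equality holds across every such edge. I would then organize the internal edges by type: the \emph{angular} edges $A^{(i)}_j:=v_{i,j}v_{i,j+1}$ inside each cycle $D_i$, the \emph{vertical} edges $\nu_{i,j}:=v_{i,j}v_{i+1,j}$, the \emph{diagonal} edges $\delta_{i,j}:=v_{i,j}v_{i+1,j-1}$, and the two families of pole spokes $v_0 v_{1,j}$ and $v_{k-2}v_{k-3,j}$. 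For each type one records the two opposite pairs of its diamond; for instance the diamond across $\nu_{i,j}$ has opposite pairs $\{A^{(i)}_j, A^{(i+1)}_{j-1}\}$ and $\{\delta_{i,j},\delta_{i,j+1}\}$, so it is non-rainbow iff one of these pairs is monochromatic.

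For the interior I would color by three transversal families drawn from three disjoint palettes: give every diagonal in column $j$ the color $\Delta_j$, every vertical in gap $i$ the color $X_i$, and every angular edge the color $\alpha_{i+j}$ depending only on $i+j$ (all indices mod $k$), subject only to $\Delta_j\neq\Delta_{j+1}$, $X_i\neq X_{i+1}$ and $\alpha_m\neq\alpha_{m+1}$. A short check shows each color class is an independent set of edges, so with disjoint palettes the at most six edges at any interior vertex get distinct colors and the coloring is proper. The three interior diamond types are then each non-rainbow for a clean reason: the diamond across an angular edge has its two opposite diagonals equal ($\Delta_{j+1}=\Delta_{j+1}$), the diamond across a vertical edge has its two opposite angular edges equal ($\alpha_{i+j}=\alpha_{(i+1)+(j-1)}$), and the diamond across a diagonal edge has its two opposite verticals equal ($X_i=X_i$).

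The real work, and what I expect to be the main obstacle, is the two poles and the adjacent layers $D_1$ and $D_{k-3}$. At $v_0$ the two ``upward'' edges of an interior vertex degenerate into the single spoke, so the clean transversal bookkeeping breaks: the diamonds whose upper apex is $v_0$ force each spoke $v_0 v_{1,j}$ to play the role of a diagonal and hence (to match $\delta_{1,j+1}$) to lie in the $\Delta$-palette, while the wheel diamonds around $v_0$ force the cycle $D_1$ into the rotating pattern of the observation and hence force the spokes into the $\alpha$-palette. These two demands on a single spoke color can only be met if the angular and diagonal palettes are allowed to overlap near the boundary, and the overlap must be arranged so that it never creates a repeated color at a single vertex. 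I would resolve this by treating $D_1,D_2$ (and symmetrically $D_{k-3},D_{k-4}$) with explicit boundary colors rather than the generic formulas, choosing the cyclic offsets so that the forced rotation closes up consistently around the $k$-cycle $D_1$; here the fact that the genuinely interior layers form the range $i\in[2,k-4]$ leaves enough room to keep all boundary vertices properly colored.

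Finally I would verify the finitely many remaining diamonds directly from the boundary colors—namely the spoke diamonds and the angular, vertical and diagonal diamonds that have a pole as an apex—and check properness at $v_0,v_{k-2}$ (degree $k$) and at the degree-$5$ vertices of $D_1$ and $D_{k-3}$. Modulo the boundary reconciliation, every step is a routine finite verification; the whole difficulty is concentrated in making the rotating boundary pattern compatible, modulo $k$, with the interior transversal coloring.
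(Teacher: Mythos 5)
Your reduction of the problem to ``diamonds'' via $5$-connectivity is correct, your catalogue of the three interior diamond types with their opposite pairs is correct, and your three-disjoint-palette transversal coloring does make every diamond avoiding the poles non-rainbow (it is in fact a legitimate permutation of the roles in the paper's coloring, which instead colors angular edges by column, verticals by $i+j$, and diagonals with just two alternating colors $a,b$). The gap is that the entire content of the lemma is an explicit coloring that also works at $v_0$ and $v_{k-2}$, and this is exactly what you defer --- and your proposed repair is not merely unfinished, it is obstructed. Since the $k$ spokes at $v_0$ need $k$ distinct colors, and in the wheel diamond $v_{1,j}v_0v_{1,j+1}v_{2,j}$ the pair $\{v_0v_{1,j+1},\nu_{1,j}\}$ can be monochromatic for at most one $j$ (all $\nu_{1,j}$ carry the single color $X_1$ while the spokes are pairwise distinct), for at least $k-1$ values of $j$ you are forced to set $c(v_0v_{1,j})=c(\delta_{1,j+1})$; so the gap-$1$ diagonals inherit the pairwise distinct spoke colors. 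The diamond across each vertical $\nu_{1,j}$ then cannot be saved by its diagonal pair and must be saved by its angular pair, forcing ring-$2$ angular colors to repeat the (nearly pairwise distinct) ring-$1$ ones; your standing requirement $\Delta_j\neq\Delta_{j+1}$ makes the same argument run in gap $2$, forcing ring $3$, and so on. The rotation propagates through \emph{every} layer, so no fix confined to $D_1,D_2,D_{k-3},D_{k-4}$ can work for large $k$, and a small angular palette $\{\alpha_m\}$ disjoint from the diagonal palette is impossible: the angular family must share the size-$k$ diagonal palette globally.

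The repair is therefore global, and carrying it out is the actual proof. For instance: take $\Delta_j=j$, color every angular edge $A^{(i)}_j$ with $i+j+2 \pmod k$, color the spokes $c(v_0v_{1,j})=j+1$ and $c(v_{k-2}v_{k-3,j})=j$, and keep the verticals on a disjoint palette with $X_{i-1}\neq X_i$. All diamonds then close up (in particular both families of pole diamonds), but properness is no longer automatic: at each $v_{i,j}$ one must check $\{i+j+1,\,i+j+2\}\cap\{j,\,j+1\}=\emptyset \pmod k$, which holds precisely because $1\le i\le k-3$, plus the separate degree-$5$ checks on $D_1$ and $D_{k-3}$ and the degree-$k$ checks at the poles. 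This verification --- or the paper's symmetric one, whose choice of roles (angular $\mapsto$ column, vertical $\mapsto$ $i+j$, diagonal $\mapsto$ $\{a,b\}$) has the advantage of working uniformly at the poles with no boundary case at all --- is where the lemma is actually proved; as written, your argument stops just before it.
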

\begin{proof}
We give a $(k+2)$-edge-coloring of $H_k$, $\sigma: E(H_k) \to [k]\cup \{a,b\}$, which is defined as follows. 
$$
\sigma(e) = \begin{cases} 
                        a & \textrm{ if $e=v_{i,j}v_{i+1,j-1}$ and $i\in [k-4]$ is odd,}\\
                        b &\textrm{ if $e=v_{i,j}v_{i+1,j-1}$ and $i\in [k-4]$ is even,}\\
                        t \textrm{ (where $t\in [k]$)} & \textrm{ if $e=v_{i,t}v_{i, t+1}$ for $i\in [k-3]$ or $e=v_{i-1,t-i}v_{i, t-i}$ for $i\in [k-2]$.}
                    \end{cases}
$$
We claim that $\sigma$ is a proper edge coloring of $H_k$ and it has no rainbow $C_4$. To prove that $\sigma$ is proper, it suffices to show that for each vertex $v$ in $H_k$, all the edges incident with $v$ receive distinct colors. We may assume the second indices for the vertices in $H_k$, as well as the colors labeled as integers, are same under modulo $k$. 
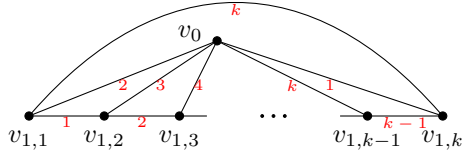
\begin{figure}[htb]
    \centering
    \begin{tikzpicture}[scale=1, wvertex/.style={circle, draw=red, fill=red, scale=0.3}, bvertex/.style={circle, draw=black, fill=black, scale=0.3},rvertex/.style={circle, draw=red, fill=red, scale=0.3}, sbvertex/.style={circle, draw=black, fill=black, scale=0.1}]
    % Define nodes
    
    \node [bvertex, label={[font=\small] left, above=1mm:$v_0$}] (v0) at (0,3) {};

    \node [bvertex, label={[font=\small] below:$v_{1,1}$}](v11) at (-2.5,2) {};
    \node [bvertex,, label={[font=\small] below:$v_{1,2}$}](v12) at (-1.5,2) {};
    \node [bvertex, label={[font=\small] below:$v_{1,3}$}](v13) at (-0.5,2) {};
    \node [bvertex, label={[font=\small] below:$v_{1,k-1}$}](v1-1) at (2,2) {};
    \node [bvertex, label={[font=\small] below:$v_{1,k}$}](v10) at (3,2) {};
    %\node [bvertex](v6) at (-3.5,0.5) {};
   % \node [bvertex, label={[font=\normalsize] left:$v$}](v7) at (-4.0,0.5) {};
\node (I1) at (0,2){};
\node (I2) at (1.5,2) {};
\draw (v13)--(I1);
\draw (v1-1)--(I2);
 \node[sbvertex] at (0.75,2){};
 \node [sbvertex] at (0.9,2){};
 \node [sbvertex] at (0.6,2){};

    \draw[-] (v11) .. controls (-0.75,4) and (1.25,4) .. node[font=\tiny, below=-1.2mm,text=red]{$k$}(v10);

   \draw (v11) -- node[font=\tiny, below=-1.2mm,text=red]{$1$}(v12) --node[font=\tiny, below=-1.2mm, text=red]{$2$} (v13);

    \draw (v1-1)-- node [font=\tiny,  below=-1.2mm, text=red]{$k-1$}(v10);
       
\draw (v0)--node[font=\tiny, below=-1.2mm,text=red]{$2$}(v11);
\draw (v0)--node[font=\tiny, below=-1.2mm,text=red]{$3$}(v12);
\draw (v0)--node[font=\tiny, below=-1.2mm,text=red]{$4$}(v13);
\draw (v0)--node[font=\tiny, below=-1.2mm,text=red]{$k$}(v1-1);
\draw (v0)--node[font=\tiny, below=-1.2mm,text=red]{$1$}(v10);

     % \node (I1) at (-4.5,-0.5) {};
     % \node (I2) at (4.5,0.5){};
     % \draw (v7) -- (I1);\draw (I2) -- (x7);
        
\end{tikzpicture}
    \caption{$v_0$ and its neighbors}
    \label{fig:v0}
\end{figure}
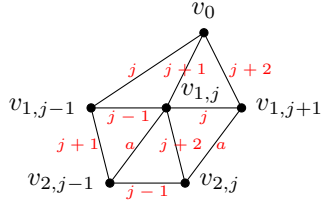
\begin{figure}[htb]
    \centering
    \begin{tikzpicture}[scale=1, wvertex/.style={circle, draw=red, fill=red, scale=0.3}, bvertex/.style={circle, draw=black, fill=black, scale=0.3},rvertex/.style={circle, draw=red, fill=red, scale=0.3}, sbvertex/.style={circle, draw=black, fill=black, scale=0.1}]
    % Define nodes
    
    \node [bvertex, label={[font=\small] above=2mm:$v_0$}] (v0) at (0.5,1) {};

     \node [bvertex, label={[font=\small] right, above=2mm:$v_{1,j}$}] (v1j) at (0,0) {};

     \node [bvertex, label={[font=\small] right:$v_{1,j+1}$}] (v1j+1) at (1,0) {};
     
     \node [bvertex, label={[font=\small] left:$v_{1,j-1}$}] (v1j-1) at (-1,0) {};
     
     \node [bvertex, label={[font=\small] left:$v_{2,j-1}$}] (v2j-1) at (-0.75,-1) {};
     
     \node [bvertex, label={[font=\small]  right:$v_{2,j}$}] (v2j) at (0.25,-1) {};
     
\draw (v0)--node[font=\tiny, midway, text=red]{$j+1$}(v1j);
\draw (v0)--node[font=\tiny, left, text=red]{$j$}(v1j-1);
\draw (v0)--node[font=\tiny, right=-0.5mm,text=red]{$j+2$}(v1j+1);

\draw (v1j-1)--node[font=\tiny, below=-1mm,text=red]{$j-1$}(v1j);
\draw (v1j)--node[font=\tiny, below=-1mm,text=red]{$j$}(v1j+1);

\draw (v1j-1)--node[font=\tiny, left=-1.2mm,text=red]{$j+1$}(v2j-1);
\draw (v1j)--node[font=\tiny, left=-1mm,text=red]{$a$}(v2j-1);
\draw (v1j+1)--node[font=\tiny, right=-1mm,text=red]{$a$}(v2j);
\draw (v1j)--node[font=\tiny, right=-3.5mm,text=red]{$j+2$}(v2j);
\draw (v2j-1)--node[font=\tiny, below=-1mm,text=red]{$j-1$}(v2j);
    \end{tikzpicture}
    \caption{$v_{1,j}$ and its neighbors}
    \label{fig:v1j}
\end{figure}
\begin{figure}[htb]
    \centering
    \begin{tikzpicture}[scale=1, wvertex/.style={circle, draw=red, fill=red, scale=0.3}, bvertex/.style={circle, draw=black, fill=black, scale=0.3},rvertex/.style={circle, draw=red, fill=red, scale=0.3}, sbvertex/.style={circle, draw=black, fill=black, scale=0.1}]
    % Define nodes
    
    \node [bvertex, label={[font=\small] left, above=2mm:$v_{i-1,j}$}] (vi-1j) at (-0.25,1) {};
    \node [bvertex, label={[font=\small] right, above=1mm:$v_{i-1,j+1}$}] (vi-1j+1) at (0.75,1) {};

     \node [bvertex, label={[font=\small] right, above=2mm:$v_{i,j}$}] (vij) at (0,0) {};

     \node [bvertex, label={[font=\small] right:$v_{i,j+1}$}] (vij+1) at (1,0) {};
     
     \node [bvertex, label={[font=\small] left:$v_{i,j-1}$}] (vij-1) at (-1,0) {};
     
     \node [bvertex, label={[font=\small] left:$v_{i+1,j-1}$}] (vi+1j-1) at (-0.75,-1) {};
     
     \node [bvertex, label={[font=\small]  right:$v_{i+1,j}$}] (vi+1j) at (0.25,-1) {};

\draw (vij)--node[font=\tiny,right=-1mm,text=red]{$a$}(vi-1j+1);
\draw (vi-1j)--node[font=\tiny, above=-1mm, text=red]{$j$}(vi-1j+1);     
\draw (vi-1j)--node[font=\tiny, midway, text=red]{$i+j$}(vij);
\draw (vi-1j)--node[font=\tiny, left, text=red]{$a$}(vij-1);
\draw (vi-1j+1)--node[font=\tiny, right=-1.5mm,text=red]{$i+j+1$}(vij+1);

\draw (vij-1)--node[font=\tiny, below=-1mm,text=red]{$j-1$}(vij);
\draw (vij)--node[font=\tiny, below=-1mm,text=red]{$j$}(vij+1);

\draw (vij-1)--node[font=\tiny, left=-1.2mm,text=red]{$i+j$}(vi+1j-1);
\draw (vij)--node[font=\tiny, left=-0.5mm,text=red]{$b$}(vi+1j-1);
\draw (vij+1)--node[font=\tiny, right=-0.5mm,text=red]{$b$}(vi+1j);
\draw (vij)--node[font=\tiny, midway,text=red]{$i+j+1$}(vi+1j);
\draw (vi+1j-1)--node[font=\tiny, below=-1mm,text=red]{$j-1$}(vi+1j);
    \end{tikzpicture}
    \caption{$v_{i,j}$ (for even $i\in [2,k-4]$) and its neighbors}
    \label{fig:vij}
\end{figure}
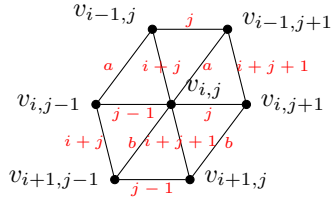
Suppose $v=v_0$.  Recall that $v_0$ has degree $k$. By the definition of $\sigma$, we have $\sigma(v_0v_{1,j})=\sigma(v_{0,j}v_{1,j})=j+1$ for every $j\in [k]$, and hence the edges incident with $v_0$ have distinct colors. We may now assume that $v=v_{1,j}$, which is a degree-$5$ vertex. Note that $\sigma(v_{1,j}v_{1,j+1})=j, \sigma(v_{1,j}v_{1,j-1})=j-1, \sigma_k(v_{1,j}v_0)=j+1, \sigma(v_{1,j}v_{2,j})=j+2$, and $\sigma(v_{1,j}v_{2,j-1})=a$. Observe that $j, j-1, j+1, j+2, a$ are pairwise distinct. Similarly, we can show that all edges incident with $v_{k-2}$ or $v_{k-3, j}$ have distinct colors. Now we consider $v_{i,j}$ for $2\le i \le k-4$ if $k\ge 6$. Note that $v_{i,j}$ is adjacent to $v_{i,j-1}, v_{i,j+1}, v_{i-1,j}, v_{i+1,j}, v_{i-1,j+1}, v_{i+1,j-1}$ and that all the six edges receive colors $j-1,j,i+j, i+j+1, a, b$. Since $2\le i \le k-4$,  the colors $j-1,j,i+j, i+j+1$ are pairwise distinct. This implies that $\sigma$ is a proper edge-coloring.

Now we show that $H_k$ contains no rainbow $C_4$ under $\sigma$. Since $H_k$ is $5$-connected, every $4$-cycle of $H_k$ is formed by two adjacent facial triangles and so it is corresponding to a unique edge of $H_k$. For each $v\in V(H_k)$ and each edge $e$ incident with $v$, it is not hard to check that the $4$-cycle determined by $e$ is not rainbow. Hence, $H_k$ has a proper edge-coloring with no rainbow $C_4$.
\end{proof}

\section{Rainbow planar Tur{\'a}n number of $C_5,C_6$}
In this section, we define an $n$-vertex planar triangulation $F_n$ for each $n\ge 4$. For each even $n$ with $n\ge 4$,  let $p:=\lfloor\frac{n}{2}\rfloor=\frac{n}{2}$ and $F_n$ be defined as follows.
$$V(F_n)=\{ u_1, u_2, \ldots, u_{p}, v_1, v_2,\ldots, v_{p}\},$$
and $$F_n=P_u\cup P_v\cup Q_1\cup Q_2,$$ where 
$P_u, P_v, Q_1, Q_2$ are paths and $$P_u=u_1u_2\ldots u_{p},$$ $$P_v=v_1v_2\ldots v_{p},$$ 
$$Q_1=u_1v_1u_2v_2\ldots u_{p-1}v_{p-1}u_{p}v_{p},$$ $$Q_2=v_2u_1v_3u_2\ldots v_{p-1}u_{p-2}v_{p}u_{p-1}.$$
It follows that for each $i\in [p]$, $u_i$ is adjacent to $u_j$ for $j\in [p]$ with $|j-i|=1$ and $v_j$ for $j\in [p]$ with $-1\le j-i\le 2$; and $v_i$ is adjacent to $v_j$ for $j\in [p]$ with $|j-i|=1$ and $u_j$ for $j\in [p]$ with $-2\le j-i\le 1$.

\begin{figure}[htb]
    \centering
    \resizebox{6cm}{!}
    {\begin{tikzpicture}[scale=1, wvertex/.style={circle, draw=red, fill=red, scale=0.3}, bvertex/.style={circle, draw=black, fill=black, scale=0.3},rvertex/.style={circle, draw=red, fill=red, scale=0.3}, sbvertex/.style={circle, draw=black, fill=black, scale=0.1}]
    % Define nodes

  \node [bvertex, label={[font=\small] above:$u_1$}] (u1) at (-2.5,0.5) {};
   \node [bvertex, label={[font=\small] above:$u_2$}] (u2) at (-1.5,0.5) {};
    \node [bvertex, label={[font=\small] above:$u_3$}] (u3) at (-0.5,0.5) {};
     \node [bvertex, label={[font=\small] above:$u_{p-1}$}] (u-1) at (1.5,0.5) {};
      \node [bvertex, label={[font=\small] above:$u_p$}] (u0) at (2.5,0.5) {};
    
  \node [bvertex, label={[font=\small] below:$v_1$}] (v1) at (-2.5,-0.5) {};
   \node [bvertex, label={[font=\small] below:$v_2$}] (v2) at (-1.5,-0.5) {};
    \node [bvertex, label={[font=\small] below:$v_3$}] (v3) at (-0.5,-0.5) {};
     \node [bvertex, label={[font=\small] below:$v_{p-1}$}] (v-1) at (1.5,-0.5) {};
      \node [bvertex, label={[font=\small] below:$v_p$}] (v0) at (2.5,-0.5) {}; 

      \draw (u1)--(u2)--(u3);
      \draw (u-1)--(u0);
    \draw (v1)--(v2)--(v3);
      \draw (v-1)--(v0);
      \draw (u1)--(v1)--(u2)--(v2)--(u3)--(v3);
      \draw (u-1)--(v-1)--(u0)--(v0);
      \draw [dashed] (v2)--(u1)--(v3) --(u2);
      \draw [dashed] (u-1)--(v0);
      
      \node (I1) at (-0.5,0) {};
      \draw [dashed] (u2)--(I1);
      \node (I2) at (1.5,0) {};
      \draw [dashed] (v0)--(I2);

      \node(I3) at (0,0.5){};
      \draw (u3)--(I3);

        \node(I4) at (0,-0.5){};
      \draw (v3)--(I4);

        \node(I5) at (1,0.5){};
      \draw (u-1)--(I5);

      \node(I6) at (1,-0.5){};
      \draw (v-1)--(I6);

\node[sbvertex] at (0.5,0) {};
\node[sbvertex] at (0.2,0) {};
\node[sbvertex] at (0.8,0) {};
\end{tikzpicture}}
    
     \caption{$F_n$ for even $n$, where $p=\lfloor\frac{n}{2}\rfloor$}
    \label{fig:C_5_even}
\end{figure}
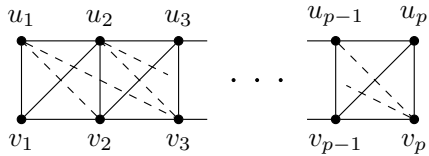
Now we use $F_{n}$, where $n\ge 4$ is even, to define $F_{n+1}$. We add a new vertex $u_0$ and join $u_0$ with $u_1,v_1,v_2$, i.e., insert $u_0$ to the face with boundary $u_1v_1v_2u_1$ and join $u_0$ to those three vertices in the boundary. In fact, $F_{n-1}=F_n-\{v_1\}$ for even $n\ge 4$. 

\begin{figure}[htb]
    \centering
    \resizebox{7cm}{!}
    {\begin{tikzpicture}[scale=1, wvertex/.style={circle, draw=red, fill=red, scale=0.3}, bvertex/.style={circle, draw=black, fill=black, scale=0.3},rvertex/.style={circle, draw=red, fill=red, scale=0.3}, sbvertex/.style={circle, draw=black, fill=black, scale=0.1}]
    % Define nodes

  \node [bvertex, label={[font=\small] above:$u_1$}] (u1) at (-2.5,0.5) {};
   \node [bvertex, label={[font=\small] above:$u_2$}] (u2) at (-1.5,0.5) {};
    \node [bvertex, label={[font=\small] above:$u_3$}] (u3) at (-0.5,0.5) {};
     \node [bvertex, label={[font=\small] above:$u_{p-1}$}] (u-1) at (1.5,0.5) {};
      \node [bvertex, label={[font=\small] above:$u_p$}] (u0) at (2.5,0.5) {};
    
  \node [bvertex, label={[font=\small] below:$v_1$}] (v1) at (-2.5,-0.5) {};
   \node [bvertex, label={[font=\small] below:$v_2$}] (v2) at (-1.5,-0.5) {};
    \node [bvertex, label={[font=\small] below:$v_3$}] (v3) at (-0.5,-0.5) {};
     \node [bvertex, label={[font=\small] below:$v_{p-1}$}] (v-1) at (1.5,-0.5) {};
      \node [bvertex, label={[font=\small] below:$v_p$}] (v0) at (2.5,-0.5) {}; 

      \draw (u1)--(u2)--(u3);
      \draw (u-1)--(u0);
    \draw (v1)--(v2)--(v3);
      \draw (v-1)--(v0);
      \draw (u1)--(v1)--(u2)--(v2)--(u3)--(v3);
      \draw (u-1)--(v-1)--(u0)--(v0);
      \draw [dashed] (v2)--(u1)--(v3) --(u2);
      \draw [dashed] (u-1)--(v0);
      
      \node (I1) at (-0.5,0) {};
      \draw [dashed] (u2)--(I1);
      \node (I2) at (1.5,0) {};
      \draw [dashed] (v0)--(I2);

      \node(I3) at (0,0.5){};
      \draw (u3)--(I3);

        \node(I4) at (0,-0.5){};
      \draw (v3)--(I4);

        \node(I5) at (1,0.5){};
      \draw (u-1)--(I5);

      \node(I6) at (1,-0.5){};
      \draw (v-1)--(I6);

\node[sbvertex] at (0.5,0) {};
\node[sbvertex] at (0.2,0) {};
\node[sbvertex] at (0.8,0) {};

\node[bvertex, label={[font=\small] above:$u_0$}] (u) at (-3.5,0.5) {};
\draw [dashed](u)--(v1);
\draw (u)-- (u1);
\draw  [dashed] (u) -- (v2);
\end{tikzpicture}}
     \caption{$F_n$ for odd $n$, where $p=\lfloor\frac{n}{2}\rfloor$}
    \label{fig:C_5_odd}
\end{figure}
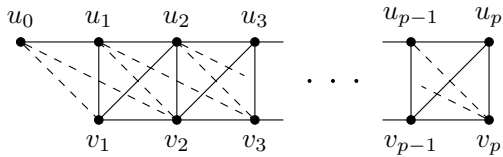
We give a proper edge-coloring of $F_n$ for each $n\ge 3$ with no rainbow $C_5$ or $C_6$ in the following lemma, which derives Theorem~\ref{thm:C5}.
\begin{lemma}
  For each $n\ge 3$, $F_n$ admits a proper edge-coloring with no rainbow $C_5$ or $C_6$.  
\end{lemma}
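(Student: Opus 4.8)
The plan is to use the explicit $6$-edge-coloring indicated in Figure~\ref{fig:6colors} and to eliminate rainbow $C_5$ and $C_6$ by a parity-plus-displacement argument that leaves only a bounded case check. Place $u_i$ at $(i,0)$ and $v_i$ at $(i,1)$, and call an edge \emph{horizontal} if its two ends lie in the same row (these are the edges $u_iu_{i+1}$ and $v_iv_{i+1}$) and \emph{crossing} otherwise. I color each horizontal edge with $1$ or $2$ according to the parity of its smaller index, each vertical edge $u_iv_i$ with $3$, each long diagonal $u_iv_{i+2}$ with $6$, and each short diagonal ($u_iv_{i+1}$ or $u_iv_{i-1}$) with $5$ if the $x$-coordinate of its left (smaller-$x$) endpoint is odd and with $4$ if it is even. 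Properness reduces to a local check at each vertex: at an interior vertex the two incident horizontal edges have left endpoints of opposite parity and so receive $\{1,2\}$, the unique vertical receives $3$, the unique long diagonal receives $6$, and the two incident short diagonals also have left endpoints of opposite parity and so receive $\{4,5\}$; the boundary vertices are checked identically.

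The key structural fact is that every cycle uses an even number of crossing edges (each crossing edge switches rows and the cycle is closed), while colors $1,2$ appear only on horizontal edges and $3,4,5,6$ only on crossing edges. Hence a rainbow $C_5$ has at most two horizontal edges, so at least three—and therefore, by parity, exactly four—crossing edges; these four must realize the colors $\{3,4,5,6\}$, namely one vertical, one long diagonal, and two short diagonals. Their $x$-spans are $0$ for the vertical and $\pm 2$ for the long diagonal (both even), while each short diagonal and the single horizontal edge has span $\pm 1$. The total $x$-displacement around the cycle is then a sum of even terms and three $\pm 1$ terms, hence odd, contradicting that it must vanish. Thus $F_n$ has no rainbow $C_5$.

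A rainbow $C_6$ must use all six colors, which forces exactly two horizontal edges (colors $1$ and $2$, so with left endpoints of opposite parity), one vertical, one long diagonal, and two short diagonals colored $4$ and $5$ (again left endpoints of opposite parity). Now the four crossing edges give an even $x$-displacement and the two horizontals contribute $\pm 1$ each, so the parity obstruction disappears: this is the heart of the proof, and precisely where the parity-sensitive coloring of the short diagonals is essential—the naive coloring that assigns one color per edge type does admit a rainbow $C_6$. To finish I note that the coloring is invariant under the translation $u_i\mapsto u_{i+2}$, $v_i\mapsto v_{i+2}$ and that a $C_6$ meets only a bounded number of columns, so up to this symmetry only finitely many $6$-cycles of the forced profile occur. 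Splitting by the rows of the two horizontal edges: if both lie in the same row, a short check shows their left endpoints share parity (they sit in a window of consecutive columns), so they repeat a color; if one lies in each row, one verifies that the two short diagonals are then forced to have left endpoints of the same parity and so repeat a color. Either way the cycle is not rainbow.

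Finally I handle the remaining $n$. For odd $n$, $F_n$ is obtained from the even graph $F_{n-1}$ by adding a single vertex $u_0$ of degree $3$ adjacent to $u_1,v_1,v_2$; I extend the coloring to its three new edges so as to keep it proper and then recheck the finitely many $5$- and $6$-cycles through $u_0$, each of which uses two of these edges. For $n=3$, $F_3$ is a triangle and has no $C_5$ or $C_6$. Together with the even case this establishes the lemma. I expect the one-horizontal-per-row $C_6$ configuration to be the main obstacle, since there both the crossing parity and the $x$-displacement parity are inconclusive and the argument must rely on the exact parity-based coloring of the short diagonals.
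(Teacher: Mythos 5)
Your coloring is, up to renaming colors, exactly the paper's coloring (horizontal edges colored $\{1,2\}$ by parity of the smaller index, short diagonals colored by the same parity rule, one color for all verticals $u_iv_i$, one for all long diagonals $u_iv_{i+2}$), so properness is not an issue. Your $C_5$ argument is correct, and it is genuinely slicker than the paper's: the paper rules out a rainbow $C_5$ by a lengthy analysis of index differences and explicit sub-cases, whereas your row-crossing parity (exactly four crossing edges, hence one vertical, one long diagonal, two short diagonals, one horizontal) plus the observation that the total $x$-displacement would be odd kills everything at once.

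The $C_6$ part, however, has a genuine gap: your Case A claim, that two horizontal edges lying in the same row must have left endpoints of the same parity, is false, and the cycles violating it are exactly the ones your argument then fails to exclude. Concretely, $D=u_1u_2u_3v_4u_4v_3u_1$ is a $6$-cycle of $F_n$ (for $n\ge 8$): both horizontal edges $u_1u_2$ and $u_2u_3$ lie in row $u$, but being adjacent they necessarily receive the distinct colors $1$ and $2$, so the contradiction you claim in Case A never materializes; this $D$ also has the forced type profile (one vertical $u_4v_4$, one long diagonal $u_1v_3$, two short diagonals). It is indeed not rainbow, but for the reason you reserve for Case B: the two short diagonals $u_3v_4$ and $v_3u_4$ both have odd left endpoints and so repeat a color. (Your parenthetical ``window of consecutive columns'' reasoning only forces equal parity when the two same-row horizontals are vertex-disjoint at distance two; adjacent horizontals sit in such a window and still have opposite parities.) Case B is likewise only asserted (``one verifies''), and the odd-$n$ step defers both the choice of colors on $u_0$'s three edges and the recheck. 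All of this is repairable in the spirit of your $C_5$ argument by one unified invariant: in a cycle with the forced profile, the displacement count shows that of the four span-one edges (two horizontals, two shorts) exactly three are traversed in one direction and one in the other; since verticals and long diagonals preserve column parity, consecutive span-one edges around the cycle alternate starting-column parity, so the four left-endpoint parities split $3$--$1$. Rainbowness, however, demands a $2$--$2$ split (colors $\{1,2\}$ on the horizontals and $\{4,5\}$ on the shorts), a contradiction covering both of your cases and, with $u_0$ colored by the same index-$0$ formulas ($u_0u_1\mapsto 2$, $u_0v_1\mapsto 4$, $u_0v_2\mapsto 6$), the odd-$n$ case as well. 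As written, though, the proposal does not prove the $C_6$ half of the lemma.
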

\begin{proof}
For each $n\ge 3$, let $p=\lfloor \frac{n}{2} \rfloor$. We use the same vertex labels as  Figures~\ref{fig:C_5_even} and~\ref{fig:C_5_odd}. Note that $V(F_n)=\{u_1, u_2, \ldots, u_p, v_1, v_2, \ldots, v_p\}$ for even $n$ and $V(F_n)=\{u_0\}\cup \{u_1, u_2, \ldots, u_p, v_1, v_2, \ldots, v_p\}$ for odd $n$.  We give a proper $6$-edge-coloring for $F_n$, $c: \E(F_n)\to [6]$ as follows.
$$
c(e) = \begin{cases} 
                        1 & \textrm{ if $e=u_iu_{i+1}$ or $e=v_iv_{i+1}$ with odd $i$,}\\
                        2 & \textrm{ if $e=u_iu_{i+1}$ or $e=v_iv_{i+1}$ with even $i$,}\\
                        3 & \textrm{ if $e=u_iv_{i+1}$ or $e=v_iu_{i+1}$ with odd $i$,}\\
                        4 & \textrm{ if $e=u_iv_{i+1}$ or $e=v_iu_{i+1}$ with even $i$,}\\
                        5 & \textrm{ if $e=u_iv_i$  for every possible $i$,}\\
                        6 & \textrm{ if $e=u_iv_{i+2}$ for every possible $i$.}
                    \end{cases}
$$
It is obvious that the coloring $c$ is proper. We claim that $F_n$ with the coloring $c$ has no rainbow $C_5$ or $C_6$. Suppose $F_n$ contains a rainbow $C_5$, say $D$. We show that $D$ must contain all colors in $\{3,4,5,6\}$. Let $X_v=\{v_1, v_2, \ldots, v_p\}$ and $X_u=V(F_n)\backslash X_v$. Observe that all edges in $F_n[X_u]$ or $F_n[X_v]$ are colored $1$ or $2$, and so it follows that $V(D)\cap X_u\neq \emptyset$ and $V(D)\cap X_v\neq \emptyset$. Observe that $D$ must have even number of edges with one end in $X_u$ and one end in $X_v$, and hence $D$ has exactly four such edges (as $D$ is a rainbow $C_5$). This implies that $D$ uses all the colors in $\{3,4,5,6\}$ and only one color in $\{1,2\}$. By the construction for $F_n$, we have that the distance between $u_i$ and $u_{i+3}$ is exactly two and $u_iv_{i+2}u_{i+3}$ is the unique $u_iu_{i+3}$-path  of length two. Similarly, we obtain that $d(v_i,v_{i+3})=2$ with the unique $v_iv_{i+3}$-path $v_iu_{i+1}v_{i+3}$ having length two, $d(u_i, v_{i+3})=2$ with exactly two $u_iv_{i+3}$-paths $u_iv_{i+2}v_{i+3}, u_iu_{i+1}v_{i+3}$ of length two, and $v_i, u_{i+3}$ have distance greater than two as there is no $v_iu_{i+3}$-path of length two in $F_n$. Moreover, for $i,j$ with $|i-j|>3$, we have that $d(u_i,u_j)>2, d(v_i,v_j)>2$, and $d(u_i, v_j)>3$. 

Since $D$ is a $5$-cycle, any two vertices in $D$ have distance at most two in $F_n$. Then it follows that for any two vertices in $D$, the absolute value of the difference for their indices cannot exceed three. Next we claim that the difference between the maximum index and the minimum index of the vertices in $D$ is exactly two.  Suppose not and then their difference is three. Assume that $u_i, u_{i+3}\in V(D)$ for some $i$. Since $u_i v_{i+2}u_{i+3}$ is the unique $u_iu_{i+3}$-path of length two in $F_n$, it follows that $u_iv_{i+2}u_{i+3}\subseteq D$. We claim that $u_{i+3}v_{i+3}\notin E(D)$. Suppose $u_{i+3}v_{i+3}\in E(D)$. Then the other edge in $D$ incident with $v_{i+3}$ is either the edge $u_{i+2}v_{i+3}$ or the edge $u_{i+1}v_{i+3}$. Note that $c(u_{i+2}v_{i+3})=c(v_{i+2}u_{i+3})$ and $c(u_{i+1}v_{i+3})=c(u_{i}v_{i+2})$. Hence $u_{i+3}v_{i+3}$ is not contained in $D$, and so $u_{i+3}u_{i+2}\in E(D)$. Since $D$ uses only one color in $\{1,2\}$, this implies that $D=u_iv_{i+2}u_{i+3}u_{i+2}v_{i+1}u_i$ and the edges $u_iv_{i+1}$ and $v_{i+2}u_{i+3}$ in $D$ have the same color, contradicting that $D$ is rainbow. Hence, both $u_i, u_{i+3}$ cannot be contained in $V(D)$. By symmetry, we have that both $v_i, v_{i+3}$ cannot be contained in $V(D)$. We may now assume that $u_i, v_{i+3}\in V(D)$ (as $v_i, u_{i+3}$ have distance greater than three). It follows that $v_i\notin V(D)$ as $v_{i+3}\in V(D)$, and $u_{i+3}\notin V(D)$ as $u_i\in V(D)$. Suppose $u_iv_{i+2}v_{i+3}\subseteq D$. We claim that $u_iv_{i+1}\in E(D)$. Since $v_{i+2}v_{i+3}\in E(D)$ is colored by color $1$, we have that $u_iu_{i+1}\notin E(D)$ and so $u_iv_{i+1}\in E(D)$. Hence, either $v_{i+1}u_{i+1}v_{i+3}\subseteq D$ or $v_{i+1}u_{i+2}v_{i+3}\subseteq D$. Observe that $c(u_{i+1}v_{i+3})=c(u_iv_{i+2})$ and $c(u_{i+2}v_{i+3})=c(u_iv_{i+1})$. Therefore, $u_iv_{i+2}v_{i+3}$ cannot be contained in $D$. Similarly, we have a contradiction if $u_iu_{i+1}v_{i+3}\subseteq D$. This implies that $u_i, v_{i+3}\in V(D)$ is not possible. Thus, the difference of the indices of any two vertices in $D$ has absolute value at most two. It follows that either $|V(D)\cap X_u|=3, |V(D)\cap X_v|=2$ or $|V(D)\cap X_v|=3, |V(D)\cap V(X_u)|=2$. Without loss of generality, we may assume that $u_i,u_{i+1}, u_{i+2}\in V(D)$ for some $i$. Then $D$ contains exactly two vertices in $v_i, v_{i+1}, v_{i+2}$. Observe that $u_i,u_{i+1}, u_{i+2}$ are not consecutive in $D$ as $D$ cannot use two edges with colors $1$ or $2$. Hence, two vertices in $V(D)\cap X_v$ are not consecutive in $D$. It follows that either $u_iu_{i+1}\in E(D)$ or $u_{i+1}u_{i+2}\in E(D)$. Since $D$ has all the colors in $\{3,4,5,6\}$, the only edge $u_iv_{i+2}$ with color $6$ must be contained in $D$. Suppose $u_iu_{i+1}\in E(D)$. This implies that $v_{i+2}u_{i+2}\in E(D)$ and $D=u_iv_{i+2}u_{i+2}v_{i+1}u_{i+1}u_i$, which is impossible as $u_{i+1}v_{i+1},u_{i+2}v_{i+2}\in E(D)$ have the same color. We may now assume that $u_{i+1}u_{i+2}\in E(D)$. 
If $u_{i+1}v_{i+2}\in E(D)$ then $D=u_iv_{i+2}u_{i+1}u_{i+2}v_{i+1}u_i$ and $u_{i+1}v_{i+2},v_{i+1}u_{i+2}$ have the same color, giving a contradiction. Thus $u_{i+2}v_{i+2}\in E(D)$ and $u_iv_{i+2}u_{i+2}u_{i+1}\subseteq D$. Then either $u_iv_iu_{i+1}\subseteq D$ or $u_iv_{i+1}u_{i+1}\subseteq D$, but both cases imply that there exist two edges in $D$ with the same color $5$. Therefore, $F_n$ has no rainbow $C_5$ under the coloring $c$.

\medskip
Now we show that $F_n$ under the coloring $c$ contains no rainbow $C_6$. Suppose $D$ is a rainbow $6$-cycle in $F_n$. We claim that the difference of the indices of any two vertices in $D$ has absolute value at most three. Suppose $x_i,x_j\in V(D)$, where $j-i \ge 4$ and $x_i\in \{u_i,v_i\}, x_j\in \{u_j, v_j\}$. Note that the distance between $x_i,x_j$ is at least three in $F_n$. Since every two vertices in a $6$-cycle have distance at most three, we know that $d(x_i,x_j)=3$ and $D$ has two internally vertex-disjoint $x_ix_j$-paths of length three. Since $j-i\ge 4$, every $x_ix_j$-path of length three must contain an edge $u_sv_{s+2}$ for some $s$. Since all the edges $u_tv_{t+2}$ are colored by the color $6$, it follows that $D$ has two edges with the same color $6$, giving a contradiction. Thus the difference between the maximum index and the minimum index of the vertices in $D$ is at most three. Suppose the difference is exactly two. Then $V(D)=\{u_i,u_{i+1}, u_{i+2}, v_i, v_{i+1}, v_{i+2}\}$ for some $i$. Observe that $u_iv_{i+2}\in E(D)$ as this is the only one edge in $F_n[D]$ with color $6$. We claim that $v_{i+1}v_{i+2}\notin E(D)$. Assume that $v_{i+1}v_{i+2}\in E(D)$. This implies that the two edges incident with $u_{i+2}$ in $D$ are the edges $u_{i+1}u_{i+2}$ and $v_{i+1}u_{i+2}$. Since $v_{i+1}v_{i+2}$ and $u_{i+1}u_{i+2}$ have the same color, this gives a contradiction and so $v_{i+1}v_{i+2}\notin E(D)$. Suppose $u_{i+1}v_{i+2}\in E(D)$. Similarly, we have $u_{i+1}u_{i+2},v_{i+1}u_{i+2}\in E(D)$, and observe that $u_{i+1}v_{i+2}$ have the same color as $v_{i+1}u_{i+2}$. Thus we have that $u_{i+2}v_{i+2}\in E(D)$. By symmetry, we know that $u_iv_i\in E(D)$, which has the same color $5$ as the edge $u_{i+2}v_{i+2}\in E(D)$. Therefore, we may assume that the minimum index and the maximum index of the vertices in $D$ are $i$ and $i+3$, respectively, for some $i$. Note that the edges $u_iv_{i+2}, u_{i+1}v_{i+3}$ are the only two edges with color $6$ in the subgraph of $F_n$ induced by $\{u_i, u_{i+1}, u_{i+2}, u_{i+3}, v_i, v_{i+1}, v_{i+2}, v_{i+3}\}$. By symmetry, we may assume that $u_iv_{i+2}\in E(D)$. We first show that $v_i\notin V(D)$. Suppose $v_i\in V(D)$. By a similar argument, we know that $u_iv_i\in E(D)$. Assume that $v_iu_{i+1}\in E(D)$. It follows that $u_{i+1}u_{i+2}\in E(D)$. Since $D$ must contain one vertex in $\{u_{i+3}, v_{i+3\}}$, we have $u_{i+2}v_{i+3}v_{i+2}\subseteq D$ or $u_{i+2}u_{i+3}v_{i+2}\subseteq D$. As $u_{i+2}v_{i+3}, v_{i+2}u_{i+3}$ have the same color as the edge $v_iu_{i+1}\in E(D)$, the edge $v_iu_{i+1}$ is not contained in $D$ and so $v_iv_{i+1}\in E(D)$. This implies that $v_{i+1}u_{i+2}\in E(D)$. Similarly, we have $u_{i+2}v_{i+3}v_{i+2}\subseteq D$ or $u_{i+2}u_{i+3}v_{i+2}\subseteq D$. Since the edges $u_{i+2}u_{i+3}, v_{i+2}v_{i+3}$ have the same color as the edge $v_iv_{i+1}\in E(D)$, it follows that $v_iv_{i+1}\notin E(D)$ and so $v_i\notin V(D)$. We next show that $u_{i+3}\notin V(D)$. Assume that $u_{i+3}\in V(D)$. If $v_{i+2}u_{i+3}\notin E(D)$, then $u_{i+2}u_{i+3}v_{i+3}\subseteq D$. It implies that $v_{i+3}u_{i+1}\in E(D)$ or $v_{i+3}v_{i+2}\in E(D)$. Both give a contradiction as $u_iv_{i+2}\in E(D)$ and $u_{i+2}u_{i+3}\in E(D)$. Thus we may assume $v_{i+2}u_{i+3}\in E(D)$. Then it follows that $u_iv_{i+1}\notin E(D)$ and $u_iu_{i+1}\in E(D)$ as $v_i\notin V(D)$. Hence $u_{i+2}u_{i+3}\notin E(D)$ since it has the same color as $u_iu_{i+1}$, and then $u_{i+3}v_{i+3}\in E(D)$. Note all the other edges incident with $v_{i+3}$, which are $u_{i+2}v_{i+3}, u_{i+1}v_{i+3}, v_{i+2}v_{i+3}$, cannot be contained in $D$. Therefore, $u_{i+3}\notin V(D)$ and $V(D)=\{u_i, u_{i+1}, u_{i+2}, v_{i+1}, v_{i+2}, v_{i+3}\}$. Since $u_{i+1}v_{i+3}\notin E(D)$, we have that $v_{i+2}v_{i+3}u_{i+2}\subseteq D$. Note that either $u_iv_{i+1}$ or $u_iu_{i+1}$ is contained in $E(D)$. But $u_iv_{i+1}$ has the same color as $u_{i+2}v_{i+3}\in E(D)$, and $u_iu_{i+1}$ has the same color as $v_{i+2}v_{i+3}\in E(D)$. Hence, $F_n$ cannot contain a rainbow $C_6$ under the coloring $c$. This completes the proof.
\end{proof}

\section*{Acknowledgments}
The author wants to thank Zi-Xia Song for proposing this problem, early discussions and helpful suggestions.

\end{document}